 \newtheorem{theorem}{Theorem}[section]
 \newtheorem{proposition}[theorem]{Proposition}
 \newtheorem{corollary}[theorem]{Corollary}
 \newtheorem{lemma}[theorem]{Lemma}
 \newtheorem{Remark}[theorem]{Remark}
 \theoremstyle{definition}
 \newtheorem{definition}{Definition}
 \newtheorem{example}[theorem]{Example}
\newcommand{\R}{
	\mathbb{R}
}
\newcommand{\smleq}{{\scaleobj{0.7}{\leq}}}
\newcommand{\dis}{\mathrm{dis}}
\newcommand{\hyp}{\mathrm{hyp}}
\newcommand{\poshyp}{\mathrm{hyp}^\smleq}
\newcommand{\gp}{\mathrm{g}}
\newcommand{\posgp}{\mathrm{g}^\smleq}
\newcommand{\pospaths}{\Gamma^\smleq}
\newcommand{\posm}{\mathrm{m}^\smleq}
\newcommand{\reeb}{\mathrm{R}}
\definecolor{darkblue}{rgb}{0.0, 0.0, 0.8}
\definecolor{darkred}{rgb}{0.8, 0.0, 0.0}
\definecolor{darkgreen}{rgb}{0.0, 0.8, 0.0}
\begin{document}

\title{Reeb Posets and Tree Approximations\footnote{This work was partially supported by NSF grants IIS-1422400 and  CCF-1526513.}}

\author[1]{Facundo M\'emoli}
\author[2]{Osman Berat Okutan}
	\affil[1]{Department of Mathematics and Department of Computer Science and Engineering, 
		The Ohio State University.
		\texttt{memoli@math.osu.edu}}
\affil[2]{Department of Mathematics, 
		The Ohio State University.
		\texttt{okutan.1@osu.edu}}

\maketitle
\begin{abstract}
A well known result in the analysis of finite metric spaces due to Gromov says that given any $(X,d_X)$ there exists a \emph{tree metric} $t_X$ on $X$ such that $\|d_X-t_X\|_\infty$ is bounded above by twice $\mathrm{hyp}(X)\cdot \log(2\,|X|)$. Here $\mathrm{hyp}(X)$ is the \emph{hyperbolicity} of $X$, a quantity that measures the \emph{treeness} of $4$-tuples of points in $X$. This bound is known to be asymptotically tight.

We improve this bound by restricting ourselves to metric spaces arising from filtered posets. By doing so we are able to replace the cardinality appearing in Gromov's bound by a certain poset theoretic invariant (the maximum length of fences in the poset) which can be much smaller thus significantly improving the approximation bound.

The setting of metric spaces arising from posets is rich: For example, save for the possible addition of new vertices, every finite metric graph can be induced from a filtered poset. Since every finite metric space can be isometrically embedded into a finite metric graph, our ideas are applicable to finite metric spaces as well.

At the core of our results lies the adaptation of the Reeb graph and Reeb tree constructions and the concept of hyperbolicity to the setting of posets, which we use to formulate and  prove a tree approximation result for any filtered poset.

\end{abstract}

\section{Introduction}
Trees, as combinatorial structures which model branching processes arise in a multitude of ways in computer science, for example as data structures that can help encoding the result of hierarchical clustering methods  \cite{j71}, or as structures encoding classification rules in decision trees \cite{duda12}. In biology, trees arise as phylogenetic trees \cite{ss03}, which help model evolutionary mechanisms. In computational geometry and data analysis trees appear for instance as contour/merge trees of functions defined on a manifold \cite{carr03,mbw13}.

From the standpoint of applications, datasets which can be associated a tree representation can be readily visualized. When a dataset does not directly lend itself to being represented as by a tree, motivated by the desire to visualize it, the question arises of what is the \emph{closest} tree to the given dataset. In this sense, one would then want to have (1) ways of quantifying the \emph{treeness} of data, (2) efficient methods for actually computing a tree that is (nearly) optimally close to the given dataset.

There are three different but related ways in which trees can be mathematically described. The first one is poset theoretic: a tree is a partially ordered set such that any two elements less than a given element are comparable, or in other words there is a unique way to go down the poset. The second is graph theoretic: a tree is a graph without loops. Finally, there is the metric way: a \emph{tree metric space} is a metric space which can be embedded in a metric tree (graph). This last description is the bridge between data analysis and combinatorics of trees. Through it, we can ask and eventually answer the following questions:

\begin{quote}\emph{How tree-like is a given metric data set?
How  does this treeness affect its geometric features? How can we obtain a tree which is close to a given dataset?}
\end{quote}

One measure of treeness of a metric space $(X,d_X)$ is given by the so called \emph{hyperbolicity constant}\footnote{Which is non-negative.} $\hyp(X,d_X)$ of $(X,d_X)$ \cite{bbi01} (see Section \ref{sec:hyp} for the definition). It is known that a metric space $(X,d_X)$ has $\hyp(X,d_X)=0$ if and only if there exists a tree metric space  $\mathcal{T}$ (i.e. a union of topological intervals without loops, endowed with the minimal path length distance) inside which $X$ can be isometrically embedded. Define $(X,d_X)$ to be a \emph{tree metric space} if and only if $\hyp(X,d_X)=0$, in which case we refer to $d_X$ as a \emph{tree metric} on $X$.

A natural question that ensues is whether the relaxed condition that $\hyp(X,d_X)$ be small (instead of $\hyp(X,d_X)=0)$, guarantees the existence of a tree metric on $X$ which is close to $d_X$. In this respect, in \cite{g87} Gromov shows that for each finite metric space $(X,d_X)$, there exists a tree metric $t_X$ on $X$ such that $$||d_X-t_X||_\infty \leq \Upsilon(X):=2 \, \hyp(X) \, \log(2|X|),$$  where $|X|$ is the cardinality of $X$. Despite the seemingly unsatisfactory fact that $\Phi(X)$ blows up with the cardinality of $X$ (unless $\hyp(X)=0$), it is known that this bound is asymptotically tight \cite{c16-nips}.  This suggests searching for alternative bounds which may perform better in more restricted scenarios.

\begin{wrapfigure}[10]{r}{0.3\textwidth}
		\vspace*{-0.3in}	
\includegraphics[width=0.3\textwidth]{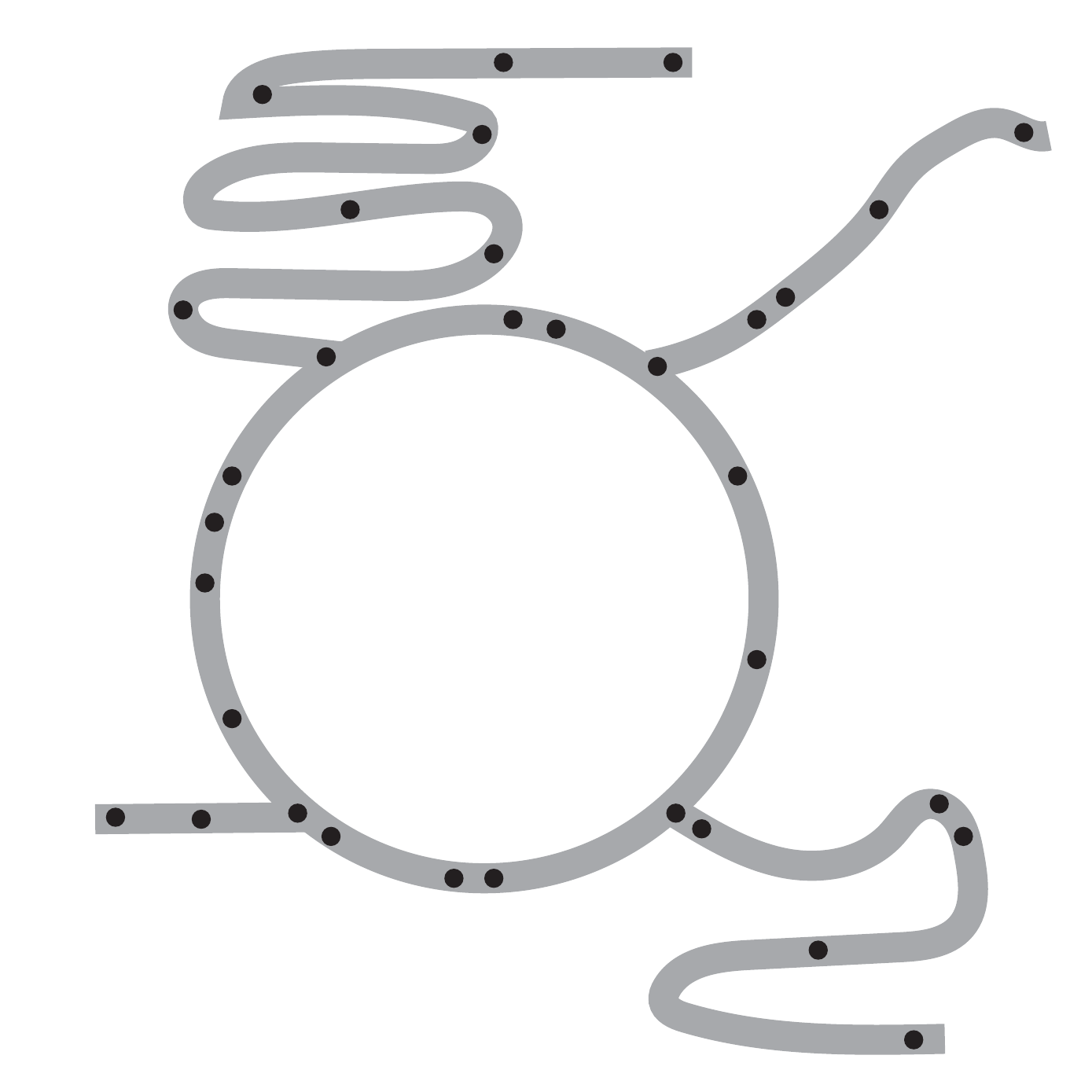}
	\end{wrapfigure} 

\paragraph{Contributions.} We refine Gromov's bound $\Upsilon(X)$ by identifying a quantity $\Phi(X)$ that is related to but often much smaller than Gromov's $\Upsilon(X)$. $\Phi(X)$ arises by  considering isometric embeddings of $X$ into a \emph{metric graph} $\mathcal{G}$. Note that this is always possible. Given one such $\mathcal{G}$ we then consider the product $$\phi(\mathcal{G}):=2\,\hyp(\mathcal{G})\,\log(4\beta_1(\mathcal{G})+4),$$ where $\beta_1(\mathcal{G})$ is the first Betti number of $\mathcal{G}$ (a notion of topological complexity). Finally, $\Phi(X)$ is defined  as the smallest possible value of $\phi(\mathcal{G})$ amongst all graphs $\mathcal{G}$ inside which $X$ can be isometrically embedded. 
We then obtain the following theorem:

 \begin{theorem}\label{thm:fms}
 For any finite metric space $(X,d_X)$ there exists a tree metric  $t_X$ such that  $$\|d_X-t_X\|_{\infty}\leq \Phi(X).$$
 \end{theorem}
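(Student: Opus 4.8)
The plan is to reduce the statement, in two stages, to a tree approximation result for filtered posets, and then to translate the poset-theoretic error bound back into the graph invariants $\hyp(\mathcal{G})$ and $\beta_1(\mathcal{G})$ that make up $\phi(\mathcal{G})$. The first stage is the outer reduction to metric graphs, the second is the inner reduction, via a Reeb-type construction, to posets; the paper's core machinery enters at the second stage.

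First I would fix the outer reduction. Since $\Phi(X)$ is by definition the infimum of $\phi(\mathcal{G})$ over all metric graphs $\mathcal{G}$ into which $X$ embeds isometrically, it suffices to produce, for each such $\mathcal{G}$, a tree metric $t_X$ on $X$ with $\|d_X - t_X\|_\infty \le \phi(\mathcal{G})$. Any tree metric I build on the vertex set of $\mathcal{G}$ restricts to a tree metric on $X$ whose sup-distance to $d_X$ is no larger, precisely because the embedding is isometric. To pass from the family of bounds $\{\phi(\mathcal{G})\}$ to the single bound $\Phi(X)$, I would take a minimizing sequence $\mathcal{G}_n$ with $\phi(\mathcal{G}_n)\to\Phi(X)$ and the associated tree metrics $t_n$; these are bounded in $\R^{\binom{|X|}{2}}$, so a subsequence converges, and since the tree metrics on a finite set form a closed subset (the four-point condition is closed), the limit $t_X$ is a tree metric with $\|d_X-t_X\|_\infty\le\Phi(X)$.

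For the inner reduction I fix one embedding $X\hookrightarrow\mathcal{G}$, equip $\mathcal{G}$ with a filtration—most naturally the distance to a chosen basepoint—and, using the fact recorded in the introduction that a finite metric graph is induced by a filtered poset up to insertion of finitely many auxiliary vertices, pass to the associated Reeb poset $P$. The construction should be arranged so that the induced metric on $P$ agrees with $d_\mathcal{G}$ on the image of $X$ and so that the poset hyperbolicity satisfies $\poshyp(P)\le\hyp(\mathcal{G})$. I would then invoke the tree approximation theorem for filtered posets advertised in the abstract, which yields a tree metric on $P$ whose sup-distance to $d_P$ is at most $2\,\poshyp(P)\,\log\!\bigl(2\,\poslng(P)\bigr)$, with $\poslng(P)$ the maximum length of a fence in $P$.

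The crux, and the step I expect to be the main obstacle, is the comparison between the order-theoretic data of $P$ and the topology of $\mathcal{G}$: I must show that the maximum fence length is controlled by the poset first Betti number, $\poslng(P)\le 2\,\genus(P)+2$, and that this matches the graph invariant, $\genus(P)=\beta_1(\mathcal{G})$. The intuition is that a long fence $x_1<x_2>x_3<x_4>\cdots$ forces the filtration on $\mathcal{G}$ to oscillate up and down repeatedly, and each completed oscillation that the order cannot undo must be supported by an independent cycle; bounding the number of essential alternations by $\beta_1$ is the combinatorial heart of the improvement over Gromov's bound, and I would prove it by tracking how fence alternations inject into a basis of $H_1(\mathcal{G})$. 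Granting this, monotonicity of $\log$ gives $2\,\poshyp(P)\,\log\!\bigl(2\,\poslng(P)\bigr)\le 2\,\hyp(\mathcal{G})\,\log\!\bigl(4\beta_1(\mathcal{G})+4\bigr)=\phi(\mathcal{G})$; restricting the resulting tree metric to $X$ settles the fixed $\mathcal{G}$, and the infimum argument of the first paragraph then yields $\Phi(X)$.
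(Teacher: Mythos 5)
Your proposal takes essentially the same route as the paper: fix an isometric embedding $X\hookrightarrow\mathcal{G}$, insert auxiliary vertices to make the graph $p$-regular, order the vertices by the basepoint filtration $d_l(p,\cdot)$ to obtain a Reeb poset whose induced metric agrees with $d_l$ and whose hyperbolicity is at most $\hyp(\mathcal{G})$, apply the poset tree-approximation theorem (Theorem \ref{main}), and bound the maximal fence length by $2\beta_1(\mathcal{G})+2$ --- exactly the content of Theorem \ref{main-graph} and its proof. The crux you single out is handled in the paper precisely along the lines you sketch: each alternation of a fence forces a distinct merging point (Lemma \ref{mergingpoint}, Proposition \ref{fence-merging}), merging points are counted by $\beta_1$ of the covering graph via Euler's formula (Proposition \ref{genus-merging}), and $p$-regularity identifies that covering graph with $\mathcal{G}$ (Lemma \ref{vertex-covering}), which is the rigorous form of your ``alternations inject into $H_1(\mathcal{G})$'' idea.
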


\begin{example}
 Consider the case when $X_n$ is a finite sample consisting of $n$ points from a fixed metric graph $\mathcal{G}$ such as in the figure above. Assume that as $n$ grows the sample becomes denser and denser inside $\mathcal{G}$. In this case, as $\hyp(X_n)\simeq\hyp(G)$ since $\hyp$ is stable \cite{c16-nips}, we have $\Upsilon(X_n)\simeq 2\,\hyp(\mathcal{G})\,\log(2n)\rightarrow \infty$ as $n\rightarrow \infty$. On the other hand, $\Phi(X_n)$ is bounded by a constant/independent of $n$ (more precisely it will be bounded by $\phi(\mathcal{G})$). 
\end{example}

\begin{Remark}
Since Gromov's bound is known to be tight \cite{c16-nips} one would expect that there exists a sequence $(Z_n)$ of finite metric spaces such that both $\Upsilon(Z_n)$ and $\Phi(Z_n)$ have the same growth order. Such a construction is given in the Appendix, Section \ref{ex:same-rate}.
\end{Remark}

%%%%%%%%%%%%%%%%%%%%%%%%%%%%%%%%%%%%%%%%%%%%
\smallskip\noindent
\textbf{The underlying idea: Reeb posets.} To obtain our bounds, we consider the case where $X$ is a metric space arising from a filtered poset. More precisely, given a poset $(X,\leq)$ with an order preserving filtration $f: X \to \R$, the filtration induces a distance $d_f$ on $X$ given by 
\begin{align*}
d_f(x,y):= \min \Bigg\{\sum_{i=1}^n | f(x_i)-f(x_{i-1})|:x_0=x,x_n=y, x_i \text{ is comparable with $x_{i+1}$ $\forall i$} \Bigg\}.
\end{align*}
A large class of metric spaces arise in this way. For example every metric graph, with possible addition of new vertices, can be realized this way. Hence, by embedding finite metric spaces into metric graphs, our methods can be applied to finite metric spaces. 

Given a a filtered poset $(X,\leq,f)$, we give a Reeb \cite{reeb} type construction to obtain a tree, which gives a metric $t_f$ on $X$. To obtain an upper bound for $\|d_f - t_f\|_\infty$, we define a filtered poset version $\poshyp_f$ of hyperbolicity and show that $||d_f-t_f||_\infty \leq  2\, \poshyp_f \, \log(2M_F), $
where $M_F$ is the poset theoretic constant given by the length of the largest fence in $(X,\leq)$. A fence is a finite chain of elements such that consecutive elements are comparable and non-consecutive elements are non-comparable. Note that the cardinality in the Gromov's result is replaced by $M_F$, which can be significantly smaller than the cardinality. 

\medskip
\noindent
\textbf{Organization of the paper.} In Section \ref{posets} we review and give some useful results about posets. In Section \ref{reeb-constructions} we introduce \textit{Reeb poset} and \textit{Reeb tree poset} constructions for filtered posets. In Section \ref{reeb-hyperbolicity} we introduce poset hyperbolicity for Reeb posets. In Section \ref{approximation} we consider tree metric approximations of Reeb posets. In Section \ref{application} we gave an application of Reeb poset constructions to finite graphs and metric spaces. In the Appendix, we give the necessary concepts and statements about metric spaces and graphs and we give an example where the growth rate of Gromov's bound is same as ours.
 
\section{Posets}\label{posets}
In this section we review some basic concepts for posets and give some results that we need later. For simplicity we are assuming that all posets we consider are finite and connected (i.e. each pair of points can be connected through a finite sequence $(x_0,\dots,x_n)$ of  points such that $x_i$ is comparable to $x_{i+1}$).
\begin{definition}[Covers and merging points]
Let $X$ be a poset. Given $x,y$ in $X$, we say that $x$ covers $y$ if $x>y$ and there is no $z$ such that $x>z>y$. A point is called a \textit{merging point} if it covers more than one elements. Given a point $x$, the number of points covered by $x$ is denoted by $\iota(x)$. Hence $x$ is a merging point if and only if $\iota(x)>1$. 
\end{definition}
\begin{lemma}\label{mergingpoint}
Let $X$ be a poset and $y,y'$ be non-comparable points in $X$. If there is a point $x$ such that $x > y,y'$, then there exists a merging point $x'$ such that $x \geq x' > y,y'$. 
\end{lemma}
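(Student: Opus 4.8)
The plan is to descend from $x$ toward $y$ and $y'$ and to isolate the lowest point that still dominates both; because $y$ and $y'$ are non-comparable, such a point is forced to branch, which is exactly what makes it a merging point. Concretely, I would consider the set
\[
S := \{\, z \in X : x \geq z,\ z > y,\ z > y' \,\}.
\]
This set is nonempty since $x \in S$, and as $X$ is finite it admits a minimal element $x'$. By construction $x \geq x' > y, y'$, so it only remains to verify that $x'$ is a merging point, i.e.\ that $\iota(x') > 1$.

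Next I would produce two distinct elements covered by $x'$. Since $X$ is finite and $x' > y$, there is a saturated chain from $x'$ down to $y$; let $a$ be the element immediately below $x'$ along it, so that $x'$ covers $a$ and $a \geq y$. Similarly, from $x' > y'$ I obtain an element $b$ with $x'$ covering $b$ and $b \geq y'$. To conclude it then suffices to show $a \neq b$, since in that case $x'$ covers at least two distinct elements.

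For this I argue by contradiction. Suppose $a = b$, so that $a \geq y$ and $a \geq y'$ simultaneously. If $a = y$, then $y = a \geq y'$ contradicts the non-comparability of $y$ and $y'$; by symmetry $a = y'$ is likewise impossible. Hence $a > y$ and $a > y'$. Since $x'$ covers $a$ we have $x' > a$, and together with $x \geq x'$ this yields $x \geq a$. But then $a \in S$ while $a < x'$, contradicting the minimality of $x'$. Therefore $a \neq b$, so $\iota(x') \geq 2$ and $x'$ is the desired merging point.

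I expect no serious obstacle here: finiteness of $X$ supplies both the minimal element of $S$ and the saturated chains used to extract $a$ and $b$. The only step demanding genuine care is the case analysis when $a$ (equivalently $b$) coincides with $y$ or $y'$, and it is precisely the hypothesis that $y$ and $y'$ are non-comparable that eliminates those degenerate possibilities and lets the minimality of $x'$ do the rest.
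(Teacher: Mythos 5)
Your proof is correct and follows essentially the same route as the paper: take a minimal element $x'$ among those satisfying $x \geq x' > y,y'$, extract two elements covered by $x'$ lying above $y$ and $y'$ respectively, and use the minimality of $x'$ to force them to be distinct. Your write-up simply makes explicit the case analysis (ruling out $a=y$, $a=y'$, and then $a\in S$) that the paper compresses into the phrase ``by the minimality of $x'$.''
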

\begin{proof}
Let $x'$ be a minimal element among all points satisfying $x\geq x'> y,y'$. We show that $x'$ is a merging point. Let $z$ be a maximal point satisfying $x'>z \geq y$ and $z'$ be maximal  satisfying $x' > z' \geq y'$. Note that x covers both $z,z'$ and $z \neq z'$ by the minimality of $x'$. 
\end{proof}

\begin{definition}[Chains and fences]
A totally ordered poset is called a \textit{chain}.  A fence is a poset whose elements can be numbered as $\{x_0,\dots,x_n\}$ so that $x_i$ is comparable to $x_{i-1}$ for each $i=1,\dots,n$ and no other two elements are comparable.  Note that a fence looks like a zigzag as its elements are ordered in the following fashion: $x_0<x_1>x_2<x_3>\dots$ or $x_0>x_1<x_2>x_3<\dots$. The \textit{length} of a chain or a fence is defined as the number of elements minus one.
\end{definition}

\begin{proposition}\label{fence-merging}
Let $X$ be a poset and $F$ be a fence with length $l$ in $X$. Then $X$ has at least $\lfloor \frac{l-1}{2} \rfloor$ merging points. 
\end{proposition}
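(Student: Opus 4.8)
The plan is to attach a distinct merging point to each \emph{interior peak} of the fence and then bound the number of such peaks from below. Write the fence as $x_0, x_1, \dots, x_l$, so that consecutive elements are comparable, all others are non-comparable, and the comparabilities zigzag. Call an interior index $i$ (with $1 \le i \le l-1$) a \emph{peak} if $x_{i-1} < x_i > x_{i+1}$. For such an $i$ the elements $x_{i-1}$ and $x_{i+1}$ have indices differing by $2$, hence are non-comparable, and both lie below $x_i$; so Lemma \ref{mergingpoint} applies to the pair $(x_{i-1}, x_{i+1})$ sitting under $x_i$ and produces a merging point $m_i$ with $x_i \ge m_i > x_{i-1}, x_{i+1}$. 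This $m_i$ is the merging point I would assign to the peak $x_i$.

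First I would count the interior peaks. Because the comparabilities of a fence alternate in direction, the interior vertices $x_1, \dots, x_{l-1}$ alternate strictly between peaks (local maxima) and valleys (local minima). Among these $l-1$ vertices the peaks therefore number either $\lceil (l-1)/2 \rceil$ or $\lfloor (l-1)/2 \rfloor$, according to whether $x_1$ is a peak or a valley; in either case there are at least $\lfloor (l-1)/2 \rfloor$ interior peaks. Thus it suffices to show that distinct interior peaks yield distinct merging points.

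The injectivity of $x_i \mapsto m_i$ is the step I expect to be the only genuine obstacle. The naive worry is that two nearby peaks, say $x_i$ and $x_{i+2}$, might share a merging point, and in a general poset nothing obviously forbids a single element from lying below two incomparable peaks. The resolution is to play the lower bound coming from one peak against the upper bound coming from the other. Suppose $i < j$ are interior peaks with $m_i = m_j =: m$. From the peak at $x_i$ we have $m > x_{i-1}$, and from the peak at $x_j$ we have $m \le x_j$; combining these gives $x_{i-1} < m \le x_j$, hence $x_{i-1} < x_j$. But the indices $i-1$ and $j$ differ by $j-(i-1) = j-i+1 \ge 2$, so $x_{i-1}$ and $x_j$ are non-comparable by the very definition of a fence, a contradiction. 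Therefore the $m_i$ are pairwise distinct.

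Putting the pieces together, $X$ contains at least one merging point per interior peak, and these are distinct, so $X$ has at least $\lfloor (l-1)/2 \rfloor$ merging points, as claimed. I would note that the case $l \le 2$ (where the bound is $0$) is vacuous, and that the argument uses only Lemma \ref{mergingpoint} together with the non-comparability of non-consecutive elements of a fence.
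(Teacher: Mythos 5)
Your proof is correct and follows essentially the same strategy as the paper's: both apply Lemma \ref{mergingpoint} at each peak of the zigzag to produce merging points, and both establish their distinctness by playing the lower bound at one peak against the upper bound at another to force a comparability between non-consecutive fence elements, contradicting the definition of a fence. The only cosmetic difference is that the paper first normalizes the fence (removing one or two endpoints) to put it in the canonical form $x_0<x_1>x_2<\dots>x_{2k}$, whereas you count interior peaks of the fence as given.
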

\begin{proof}
Let us start with the case $l$ is even. Then $\lfloor \frac{l-1}{2} \rfloor=\frac{l-2}{2}$. By removing two endpoints if necessary, we get a fence of the form $x_0<x_1>x_2<\dots >x_{2k-2}<x_{2k-1}>x_{2k} $, where $l \leq 2k+2$. Let us show that $X$ has $k\geq \frac{l-2}{2}$ merging points. For $i=1,\dots,k$ let $y_i$ be a merging point such that $x_{2i-1} \geq y_i \geq x_{2i-2},x_{2i}$ whose existence is given by Lemma \ref{mergingpoint}. It is enough to show that $y_i$'s are distinct. Assume $i\leq j$ and $y_i=y_j$. Then $x_{2i-2} \leq y_i = y_j \leq x_{2j-1}$, so $2i-1 \geq 2j-1>2i-2$ and we have $i=j$. This completes $l$ is even case.

Now assume that $l$ is odd. Then by removing one of the endpoints, we get a fence of the form $x_0<x_1>x_2<\dots >x_{2k-2}<x_{2k-1}>x_{2k} $, where $l=2k+1$. Note that $k$ is exactly $\lfloor \frac{l-1}{2} \rfloor$ and by the analysis above $X$ has $k$ merging points.
\end{proof}

\begin{definition}[Covering graph]
The covering graph of a poset $X$ is the directed graph $(V,E)$ whose vertex set is $X$ and a directed edge is given by $(x,x')$ where $x'$ covers $x$.
\end{definition}
Recall that the first Betti number $\beta_1$ of a graph is defined as the minimal number of edges one needs to remove to obtain a tree.
\begin{proposition}\label{genus-merging}
Let $X$ be a poset with a smallest element $0$ and $G$ be the covering graph of $X$.  Then $\beta_1(G)=\sum_{x:\iota(x) \geq 1} (\iota(x)-1)$.
\end{proposition}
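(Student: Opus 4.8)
The plan is to compute $\beta_1(G)$ directly from its definition as an edge count and then match it against the claimed sum via a careful count of vertices. Since a tree on $N$ vertices has $N-1$ edges, for a connected graph $G=(V,E)$ the minimal number of edges one must delete to reach a spanning tree is $|E|-(|V|-1)$, so $\beta_1(G)=|E|-|V|+1$. The first task is therefore to establish that $G$ is connected and then to express $|E|$ and $|V|$ in terms of poset data.

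First I would argue that $G$ is connected. Two vertices are joined by an edge of the covering graph exactly when one covers the other, and in a finite poset $x$ and $y$ are comparable if and only if they are linked by a chain of covering relations. Hence the underlying undirected graph of $G$ is connected precisely because $X$ is assumed connected as a poset throughout this section, and we may use $\beta_1(G)=|E|-|X|+1$.

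Next I would count edges. Each directed edge of $G$ has the form $(y,x)$ with $x$ covering $y$, so grouping edges by their head $x$ yields exactly $\iota(x)$ edges pointing into $x$; summing gives $|E|=\sum_{x\in X}\iota(x)$. On the target side, the terms with $\iota(x)=0$ contribute nothing, so I would rewrite
\begin{align*}
\sum_{x:\,\iota(x)\geq 1}\big(\iota(x)-1\big)=\sum_{x\in X}\iota(x)-\big|\{x:\iota(x)\geq 1\}\big|.
\end{align*}
Comparing this with $\beta_1(G)=\sum_{x\in X}\iota(x)-|X|+1$, the entire statement collapses to the single identity $\big|\{x:\iota(x)\geq 1\}\big|=|X|-1$, that is, to showing that exactly one element of $X$ satisfies $\iota(x)=0$.

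The remaining point, which is really the only thing demanding care, is to identify the elements with $\iota(x)=0$. In a finite poset, $\iota(x)=0$ holds if and only if $x$ covers nothing, which is equivalent to $x$ being minimal: if some $y<x$ existed, a maximal chain from $y$ up to $x$ would exhibit an element covered by $x$. Since $X$ has a smallest element $0$, every $x\neq 0$ satisfies $0<x$ and is thus non-minimal, so $0$ is the unique minimal element and $\{x:\iota(x)=0\}=\{0\}$. This gives $\big|\{x:\iota(x)\geq 1\}\big|=|X|-1$, and the two expressions agree. I expect no serious obstacle; the only delicate places are the connectivity claim (needed so that the Betti number is $|E|-|V|+1$ rather than $|E|-|V|+c$) and the use of finiteness to pass between ``covers nothing'' and ``is minimal''.
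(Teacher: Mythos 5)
Your proof is correct and follows essentially the same route as the paper's: both compute $\beta_1(G)=1+e-v$ via Euler's formula, count edges as $\sum_x \iota(x)$ by grouping covering relations, and use that $0$ is the unique vertex with $\iota(x)=0$ to get $v-1=|\{x:\iota(x)\geq 1\}|$. Your write-up merely makes explicit two points the paper leaves implicit (connectivity of the covering graph, and the finiteness argument that $\iota(x)=0$ forces $x$ minimal), which is fine.
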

\begin{proof}
By Euler's formula, the first Betti number of a graph is equal to $1 + e - v$, where $e$ is the number of edges and $v$ is the number of vertices. Note that since edges in $G$ are given by the covering relations in $X$, the number of edges $e$ of $G$ is equal to $e=\sum_{x:\iota(x)\geq 1} \iota(x)$. Since the only vertex with $\iota(x)=0$ is $x=0$, we have $v-1=\sum_{\iota(x)\geq 1}1$. Hence 
$\beta_1(G)= 1 + e - v=e-(v-1)= \sum_{x:\iota(x) \geq 1} (\iota(x)-1). $
\end{proof}
\begin{corollary}\label{fence-genus}
Let $X$ be a poset with a smallest element $0$ and $\beta$ be the first Betti number of the covering graph of $X$. Then the length of a fence in $X$ is less than or equal to $2\beta+2$.
\end{corollary}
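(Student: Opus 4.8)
The plan is to chain together the two preceding propositions, with the only new observation being that the number of merging points of $X$ is bounded above by $\beta$. First I would note that a merging point is precisely a point $x$ with $\iota(x)\geq 2$, so each merging point contributes a summand $\iota(x)-1\geq 1$ to the sum appearing in Proposition \ref{genus-merging}, while points with $\iota(x)=1$ contribute $0$. Since that proposition identifies $\sum_{x:\iota(x)\geq 1}(\iota(x)-1)$ with $\beta_1(G)=\beta$, and every term in the sum is nonnegative, the count of merging points satisfies
\[
\#\{x:\iota(x)\geq 2\}\;\leq\;\sum_{x:\iota(x)\geq 2}(\iota(x)-1)\;=\;\sum_{x:\iota(x)\geq 1}(\iota(x)-1)\;=\;\beta.
\]

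Next I would invoke Proposition \ref{fence-merging}: a fence of length $l$ in $X$ forces at least $\lfloor\frac{l-1}{2}\rfloor$ distinct merging points. Combining this with the bound just obtained yields $\lfloor\frac{l-1}{2}\rfloor\leq\beta$, and it remains only to solve this inequality for $l$. A short parity check finishes the argument: if $l=2k+1$ is odd then $\lfloor\frac{l-1}{2}\rfloor=k$, so $k\leq\beta$ gives $l=2k+1\leq 2\beta+1\leq 2\beta+2$; if $l=2k$ is even then $\lfloor\frac{l-1}{2}\rfloor=k-1$, so $k-1\leq\beta$ gives $l=2k\leq 2\beta+2$. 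In either case $l\leq 2\beta+2$, as claimed.

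There is essentially no hard step here; the content lives entirely in the two propositions already proved, and the corollary is a bookkeeping consequence. The one place demanding a little care is the passage from the floor inequality to the clean bound $l\leq 2\beta+2$, since the even case is exactly the tight one (the $-1$ inside the floor is what makes $l=2\beta+2$ achievable rather than only $l=2\beta+1$). I would therefore present the parity split explicitly rather than estimating $\lfloor\frac{l-1}{2}\rfloor\geq\frac{l-2}{2}$ and rounding, to avoid losing the additive constant.
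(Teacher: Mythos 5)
Your proof is correct and takes essentially the same route as the paper: bound the number of merging points by $\beta$ using Proposition \ref{genus-merging}, then combine with Proposition \ref{fence-merging} to bound the fence length. The only difference is cosmetic: the paper uses the estimate $\frac{l}{2}-1 \leq \lfloor\frac{l-1}{2}\rfloor \leq m$ directly, which---contrary to your closing concern---loses no additive constant, since $\frac{l-2}{2}\leq\beta$ already yields $l\leq 2\beta+2$, exactly as your parity split does.
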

\begin{proof}
Let $F$ be a fence of length $l$ in $X$. Let $m$ be the number of merging points in $X$. Then by Proposition \ref{fence-merging} $\frac{l}{2}-1 \leq \lfloor \frac{l-1}{2} \rfloor \leq m$, so $l \leq 2m+2$. By Proposition \ref{genus-merging}, $m \leq \sum_{\iota(x)>1} (\iota(x)-1)= \beta$.
\end{proof}

\begin{definition}[Tree]
A (connected) poset $(T,\leq)$ is called a tree if for each $x$ in the the set of elements less than or equal to $x$ form a chain. In other words, non-comparable elements does not have a common upper bound.
\end{definition}

\begin{Remark}\label{tree-min}
A tree has a smallest element. 
\end{Remark}
\begin{proof}
Let $x$ be a minimal element. Let us show that it is the smallest element. Let $x'$ be a point in $T$ distinct from $x$.  By connectivity, there exists a minimal chain $(x_0,\dots,x_n)$ of elements so that $x_0=x,x_n=x'$. By minimality of the chain, this sequence is a fence. By minimality of $x_0=x$, $x_0 < x_1$. This implies $n=1$, since otherwise $x_1$ is an upper bound for non-comparable elements $x_0,x_2$. Hence $x=x_0 < x_1=x'$.
\end{proof}

\begin{proposition}\label{tree-merging}
A poset $T$ is a tree if and only if it does not contain any merging point.
\end{proposition}
\begin{proof}
``$\implies$'' Note that  if $x$ is a merging point then $x$ covers non-comparable elements, hence a tree does not contain merging points.

``$\impliedby$'' By Lemma \ref{mergingpoint}, if non-comparable elements have a common upper bound, then there exists a merging point. Hence if there are no merging points, then the set $\{x':x'\leq x \}$ is a chain for all $x$, hence $T$ is a tree. 
\end{proof}
The following proposition gives some other characterizations of tree posets.
\begin{proposition}\label{tree-char-fence}
Let $T$ be a poset with the minimal element $0$. Then the following are equivalent
\begin{enumerate}[(i)]
\item $T$ is a tree.
\item If $F$ is a fence in $T$, then the length of $F$ is less than or equal to two and if it is two then $F=x>y<z$.
\item The covering graph of $T$ is a tree.
\end{enumerate}
\end{proposition}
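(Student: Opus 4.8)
The plan is to establish the cycle of implications (i) $\Rightarrow$ (iii) $\Rightarrow$ (ii) $\Rightarrow$ (i), leaning heavily on the structural results already proved rather than reasoning from scratch. Essentially all of the work has been front-loaded into Propositions \ref{tree-merging} and \ref{genus-merging} and Corollary \ref{fence-genus}, so the proof is mostly a matter of assembling these statements correctly, with a single genuinely new observation needed for the shape constraint hidden in (ii).

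For (i) $\Rightarrow$ (iii), I would first invoke Proposition \ref{tree-merging} to conclude that a tree $T$ has no merging points, i.e. $\iota(x) \leq 1$ for every $x$. Since $T$ has the minimal element $0$, Proposition \ref{genus-merging} applies and gives $\beta_1(G) = \sum_{x:\iota(x)\geq 1}(\iota(x)-1) = 0$. As the covering graph $G$ is connected (the poset is connected by our standing assumption), a vanishing first Betti number forces $G$ to be a tree.

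For (iii) $\Rightarrow$ (ii), the length bound is immediate: if $G$ is a tree then $\beta_1(G)=0$, so by Corollary \ref{fence-genus} every fence in $T$ has length at most $2\cdot 0 + 2 = 2$. The remaining content is the shape restriction. Here I would again read off from $\beta_1(G)=0$ and Proposition \ref{genus-merging} that $T$ has no merging points, hence is a tree by Proposition \ref{tree-merging}. A fence of length two is a zigzag $a,b,c$ with $a,c$ non-comparable and $b$ comparable to both; the two possible shapes are $a>b<c$ and $a<b>c$. The second shape would exhibit $b$ as a common upper bound of the non-comparable pair $a,c$, contradicting the definition of a tree, so only the form $x>y<z$ survives.

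Finally, for (ii) $\Rightarrow$ (i) I would argue by contraposition. If $T$ is not a tree, then by definition there exist non-comparable $y,y'$ with a common upper bound $x$, and the triple $y<x>y'$ is a fence of length two of the forbidden shape $a<b>c$, violating (ii). I expect the only subtle point of the whole argument to be exactly this shape dichotomy: the length bound from Corollary \ref{fence-genus} does not by itself yield (ii), and one must separately exclude the ``upward-pointing'' length-two fence using the common-upper-bound characterization of trees. Everything else is a direct citation of the prior results.
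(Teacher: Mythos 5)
Your proof is correct, but it traverses the cycle of implications in the opposite direction from the paper: you prove (i) $\Rightarrow$ (iii) $\Rightarrow$ (ii) $\Rightarrow$ (i), while the paper proves (i) $\Rightarrow$ (ii) $\Rightarrow$ (iii) $\Rightarrow$ (i). The substantive differences are two. First, for the length bound in (ii) you route through Betti numbers: $\beta_1 = 0$ plus Corollary \ref{fence-genus} gives fences of length at most $2$. The paper instead gets (i) $\Rightarrow$ (ii) by a purely combinatorial observation that a tree contains no fence of shape $x<y>z$ and that any fence of length greater than two contains such a sub-fence; this is more elementary and avoids the counting machinery, though your version is an equally legitimate use of results already proved. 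Second, your (ii) $\Rightarrow$ (i) contraposition (non-comparable elements with a common upper bound produce the forbidden fence $y<x>y'$) goes straight to the definition of a tree, whereas the paper's corresponding step, inside (ii) $\Rightarrow$ (iii), passes through merging points (a merging point covers two non-comparable elements, producing $x<y>z$) and then applies Proposition \ref{genus-merging}; your variant is slightly more direct since it needs neither merging points nor Lemma \ref{mergingpoint}. One structural remark: your (iii) $\Rightarrow$ (ii) step internally re-derives that $T$ is a tree (via Propositions \ref{genus-merging} and \ref{tree-merging}) in order to exclude the shape $a<b>c$, so you are in effect proving (iii) $\Rightarrow$ (i) and then a piece of (i) $\Rightarrow$ (ii) inside that single implication; this is logically sound but means your cycle hides some redundancy that the paper's ordering avoids.
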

\begin{proof}
(i) $\implies$ (ii) A tree does not contain a fence of the form $x<y>z$. Any fence of length greater than two contains a sub-fence of the form $x<y>z$.

\noindent 
(ii) $\implies$ (iii) If $y$ is a merging point, then there are non-comparable elements $x,z$ such that $x<y>z$, hence $T$ does not contain any merging point. By Proposition \ref{genus-merging}, the genus of the covering graph is $0$, hence it is a tree.

\noindent 
(iii) $\implies$ (i) By Proposition \ref{genus-merging}, $T$ does not contain any merging points. By Proposition \ref{tree-merging}, $T$ is a tree.

\end{proof}    
 
 \section{Reeb constructions}\label{reeb-constructions}
In this section we generalize the definition of Reeb graphs \cite{reeb} to posets.

\subsection{Poset paths and length structures}
Two basic concepts used in defining Reeb graphs (as metric graphs) for topological spaces are those of \textit{paths} and \textit{length} \cite{b14}. We start by introducing these concepts in the poset setting.

\begin{definition}[Poset path]
Let $X$ be a poset and $x,y$ be points in $X$. A \textit{poset path} from $x$ to $y$  is an $n$-tuple $(x_0,\dots,x_n)$ of points of $X$ such that $x_0=x,x_n=y$ and $x_{i-1}$ is comparable with $x_i$ for $i=1,\dots,n$. We denote the set of all poset paths from $x$ to $y$ by $\pospaths(x,y)$. By $\pospaths$ we denote the union $\cup_{x,y \in X} \pospaths(x,y)$. Let us call a poset path \textit{simple} if $x_i \neq x_j$ for $i \neq j$. The \textit{image} of the path $(x_0,\dots,x_n)$ is $\{x_0,\dots,x_n \}.$
Note that a finite chain is the image of a simple path which is monotonous and fence is the image of a simple path $(x_0,\dots,x_n)$ where $x_i$ is not comparable with $x_j$ if $j \neq i-1,i,i+1$.
\end{definition}

Note that a poset path as defined above corresponds to an edge path in the comparability graph of the poset. Recall that the comparability graph of a poset is the graph whose set of vertices is the elements of the poset and the edges are given by comparable distinct vertices.

\begin{definition}[Inverse path]
Given a poset path $\gamma=(x_0,\dots,x_n)$, the \textit{inverse path} $\bar{\gamma}$ is defined as $\bar{\gamma}:=(x_n,\dots,x_0)$. 
\end{definition}

\begin{definition}[Concatenation of paths]
If $\gamma=(x_0,\dots,x_n)$ and $\gamma'=(y_0,\dots,y_m)$ are poset paths such that the terminal point $x_n$ of $\gamma$ is equal to the initial point $y_0$ of $\gamma'$, then we define the \textit{concatenation} of $\gamma,\gamma'$ by  $\gamma \cdot \gamma':=(x_0,\dots,x_n,y_1,\dots, y_m).$
\end{definition}

\begin{definition}[Length structure over a poset]
A \textit{length structure} $l$ over a poset $X$ assigns a non-negative real number to each poset path, which is additive under concatenation, invariant under path inversion and definite in the sense that non-constant paths have non-zero lengths.
\end{definition}

\begin{Remark}\label{metric-covering-graph}
If $X$ is a poset with a length structure, then its covering graph becomes a metric graph in a canonical way where the length of an edge is given by its length as a poset path.
\end{Remark}

\begin{definition}[Induced metric]
Given a length structure $l$ over a poset $X$, we define $d_l: X \times X \to [0,\infty]$ as $$d_l(x,y)=\inf_{\gamma \in \pospaths(x,y)} l(\gamma). $$
This is an extended metric over $X$ and called the metric \textit{induced} by $l$.
\end{definition}

\begin{definition}[Length minimizing paths]
Let $X$ be a poset with a length structure $l$. A poset path $\gamma \in \pospaths(x,y)$ is called \textit{length minimizing} if $d_l(x,y)=l(\gamma)$.
\end{definition}

We will later see that fences play an important role in minimization problems, including distance minimization.

\subsection{Reeb posets}
Let $X$ be a finite poset and $f: X \to (\R,\leq)$ be an order preserving function, i.e. $x \leq y$ implies that $f(x) \leq f(y)$. Let us define a relation $\sim$ on $X$ as follows: We say $x \sim y$ if there exists $\gamma \in \pospaths(x,y)$ such that $f$ is constant along $\gamma$, more precisely if $\gamma=(x_0,\dots,x_n)$, then $f(x_i)=f(x_j)$. This is an equivalence relation: reflexivity $x \sim x$ follows from the constant path $(x)$, symmetry follows from considering inverse path, and transitivity follows from concatenation. Let us denote the equivalence class of $x$ under this relation by $\tilde{x}$ and the quotient set $X/\sim$ by $\reeb_f(X)$. 

We define $\leq$ on $\reeb_f(X)$ as follows: $ \tilde{x} \leq \tilde{y}$ if there exists $\gamma \in \pospaths(x,y)$ such that $f$ is non-decreasing along $\gamma$. This is well defined since different representatives of the same equivalence class can be connected through an $f$-constant poset path.  Transitivity follows from  concatenation. Also note that if $\tilde{x} \leq \tilde{y}$ and $\tilde{y} \leq \tilde{y}$ then  $f$-nondecreasing paths connecting $x$ to $y$ and $y$ to $x$ have to be $f$-constant, so $\tilde{x}=\tilde{y}$. Therefore $(\reeb_f(X),\leq)$ is a poset. Also note that $f$ is still well defined on $\reeb_f(X)$ since $f$ is constant inside equivalence classes.

\begin{definition}[Reeb poset of $f: X \to \R$]
We call $(\reeb_f,\leq,f:\reeb_f \to \R)$ described above the \textit{Reeb poset} of the order preserving map $f: X \to \R$.
\end{definition}

\begin{Remark}\label{reeb-basic}
\begin{enumerate}[i)]
\item The quotient map $X \to \reeb_f(X)$ is order preserving.
\item $f:R_f \to \R$ is strictly order preserving.
\end{enumerate}
\end{Remark}
\begin{proof}
``i)'' If $x \leq y$, just take the $f$-nondecreasing path $(x,y)$.

``ii)'' Let $\tilde{x} <_f \tilde{y}$, then there exists a $f$-nondecrasing poset path $\gamma \in \pospaths(x,y)$. The poset path $\gamma$ is not $f$-constant as $\tilde{x} \neq \tilde{y}$. Therefore $f(\tilde{x})=f(x)<f(y)=f(\tilde{y})$. 
\end{proof}
\begin{Remark}\label{reeb-idempotent}
If $f: X \to \R$ is strictly order preserving, then $\reeb_f(X)=X$ as a poset.
\end{Remark}
\begin{proof}
Since $f$ is strict, no non-constant path is $f$-constant. Hence $\tilde{x}=\{x\}$. Therefore the order preserving quotient map $X \to \reeb_f(X)$ is an isomorphism.
\end{proof}
\begin{corollary}
$\reeb_f(\reeb_f(X))=\reeb_f(X)$.
\end{corollary}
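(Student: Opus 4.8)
The plan is to deduce the corollary directly from the two preceding remarks by showing that the map $f$, when viewed on the Reeb poset $\reeb_f(X)$, is already strictly order preserving, so that Remark~\ref{reeb-idempotent} applies to it verbatim. Concretely, I would first invoke part (ii) of Remark~\ref{reeb-basic}, which states precisely that $f:\reeb_f(X)\to\R$ is strictly order preserving. This is exactly the hypothesis required by Remark~\ref{reeb-idempotent}, with the poset $X$ there replaced by $\reeb_f(X)$.

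Once that observation is in place, the argument is a one-line application of Remark~\ref{reeb-idempotent}. That remark asserts that whenever $g:Y\to\R$ is strictly order preserving on a poset $Y$, one has $\reeb_g(Y)=Y$ as posets, because no non-constant poset path can be $g$-constant, forcing every equivalence class to be a singleton and the quotient map to be an isomorphism. Taking $Y=\reeb_f(X)$ and $g=f$ (the induced map, which is well defined on the Reeb poset since $f$ is constant on equivalence classes), strict order preservation gives $\reeb_f(\reeb_f(X))=\reeb_f(X)$.

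I would therefore structure the proof as: (1) by Remark~\ref{reeb-basic}(ii), the function $f$ on $\reeb_f(X)$ is strictly order preserving; (2) hence by Remark~\ref{reeb-idempotent} applied to the poset $\reeb_f(X)$ with this strictly order preserving function, the Reeb construction returns the poset unchanged, i.e. $\reeb_f(\reeb_f(X))=\reeb_f(X)$. The one subtlety worth stating explicitly is that the $f$ appearing in the inner and outer applications is the same induced function, so that $\reeb_f(\reeb_f(X))$ genuinely means the Reeb poset of the already-quotiented map; this is legitimate because $f$ descends to the quotient, as noted in the construction of the Reeb poset.

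I expect no real obstacle here, since the corollary is essentially a formal consequence of the idempotency packaged in the two remarks; the only thing to be careful about is the bookkeeping of which $f$ is meant at each stage and the explicit identification of the strict-order-preservation hypothesis of Remark~\ref{reeb-idempotent} with the conclusion of Remark~\ref{reeb-basic}(ii). The proof is thus complete in two short sentences invoking these results.
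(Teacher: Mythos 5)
Your proof is correct and matches the paper's intended argument: the paper states this corollary without proof precisely because it follows immediately from Remark~\ref{reeb-basic}(ii) (strict order preservation of $f$ on $\reeb_f(X)$) combined with Remark~\ref{reeb-idempotent}, which is exactly the two-step deduction you give. Your explicit care about which induced $f$ is meant at each stage is a reasonable addition but does not change the substance.
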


\begin{Remark}
Let $R$ be a poset with an order preserving filtration $f:R \to \R$. $(R,f)$ is the Reeb poset of some order preserving map $f': (X,\leq) \to \R$ if and only if $f$ is strictly increasing.
\end{Remark}

Inspired by this remark, we give the following definition.
\begin{definition}[Reeb poset, Reeb tree poset]
A \textit{Reeb poset} $(R,f)$ is a poset $R$ with a strictly order preserving map $f: R \to \R$.  A Reeb poset is called a \textit{Reeb tree poset} if $R$ is a tree. (Recall that a poset is called a tree if for each element $x$ the elements less than or equal to $x$ form a chain.)
\end{definition}

\begin{definition}[Reeb metric]
A Reeb poset $(R,f)$ carries a canonical length structure $l_f$ defined as follows: For every poset path $\gamma = (x_0,\dots,x_n) \in \pospaths$
$$l_f(\gamma):=\sum_{i=1}^n|f(x_i)-f(x_{i-1})|.$$
Additivity and invariance with respect to inversion are obvious. Definiteness follows from the strictness of $f$. The metric structure induced by this length structure is denoted by $d_f$ and is called the \textit{Reeb metric} induced by $f$. 
\end{definition}

\begin{Remark}\label{distance-difference}
If $\gamma \in \pospaths(x,y)$, then $l_f(\gamma)\geq |f(x)-f(y)|$. Hence, $d_f(x,y) \geq |f(x)-f(y)|$. Therefore, if $x,y$ are comparable, then $d_f(x,y)=|f(x)-f(y)|$. Furthermore, for a Reeb graph $(R,f)$, $d_f(x,y)=|f(x)-f(y)|$ if and only if $x,y$ are comparable since the equality implies that there exists an $f$-increasing path between $x,y$ and in that case  $x,y$ are comparable by the equality $R_f(R)=R$.
\end{Remark}
\begin{Remark}\label{distancemin-fence}
The distance $d_f(x,y)$ can be realized as the $f$-length $l_f(\gamma)$ where $\gamma$ is a fence. This can be done by taking a length minimizing curve and removing points until one gets a fence (i.e. if $i+1<j$ and $x_i$ comparable to $x_j$ remove all points between $x_i,x_j$).  
\end{Remark}

\subsection{Reeb tree posets}

In this subsection we focus on the construction, characterization and properties of Reeb tree posets.

Let $X$ be a poset with an order preserving function $f: X \to \R$. Define an equivalence relation $\sim$ on $X$ as follows: $x \sim y$ if there exists $\gamma$ in $\pospaths(x,y)$ such that for each $x_i$ in $\gamma$ $f(x_i)\geq \max(f(x),f(y))$. Note that this implies that if $x \sim y$, then $f(x)=f(y)$. The fact that this is an equivalence relation follows from concatenation of paths. Let us denote the equivalence class of a point $x$ by $\tilde{x}$ and the quotient set by $T_f(X)$. Note that $f$ is still well defined on $T_f(X)$. Let us define a partial order $\leq$ on $T_f(X)$ as follows: $\tilde{x} \leq \tilde{y}$ if there exists $\gamma$ in $\pospaths(x,y)$ such that for each $x_i$ in $\gamma$, $f(x_i) \geq f(x)$. This is well defined since different representatives of an equivalence class can be connected by a poset path on which $f$ takes values greater than or equal to that of the representatives.   Let us show that this is a partial order. Reflexivity (i.e. $\tilde{x}\leq\tilde{x}$) follows from the constant path and transitivity (i.e. $\tilde{x} \leq \tilde{y} \leq \tilde{z} \implies \tilde{x} \leq \tilde{z}$)  follows from concatenation of paths. If $\tilde{x} \leq \tilde{y}$ then $f(x) \leq f(y)$, hence if $\tilde{x} \leq \tilde{y}$ and $\tilde{y} \leq \tilde{x}$, then $f(x)=f(y)$ thus the path giving $\tilde{x} \leq \tilde{y}$ also gives $\tilde{x}=\tilde{y}$. Note that this also shows that $f:T_f(X) \to \R$ is strictly order preserving. Note that $X \to T_f(X)$ is order preserving.

\begin{proposition}\label{reebtree}
$(T_f(X),\leq)$ is a tree.
\end{proposition}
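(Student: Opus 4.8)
The plan is to show that $(T_f(X),\leq)$ contains no merging points and then invoke Proposition~\ref{tree-merging}, which states that a poset is a tree precisely when it has no merging point. This reduces the geometric-looking statement to a purely combinatorial check, and it sidesteps having to verify the tree condition (comparability of all lower sets) directly.

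First I would set up a proof by contradiction: suppose $\tilde{x}$ is a merging point of $T_f(X)$, so there are non-comparable elements $\tilde{y},\tilde{y}'$ with $\tilde{x}>\tilde{y}$ and $\tilde{x}>\tilde{y}'$. By the definition of $\leq$ on $T_f(X)$, the relation $\tilde{x}>\tilde{y}$ is witnessed by a poset path $\gamma$ from $y$ to $x$ (or $x$ to $y$) along which $f$ stays $\geq f(y)$, and similarly $\tilde{x}>\tilde{y}'$ is witnessed by a path $\gamma'$ with $f\geq f(y')$. Without loss of generality assume $f(y)\leq f(y')$. The key is to concatenate these witnessing paths through the common upper endpoint $x$: travelling from $y$ up to $x$ and then from $x$ down to $y'$ produces a path from $y$ to $y'$. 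Along this concatenated path I would argue that $f$ stays $\geq f(y)$ — the first leg has $f\geq f(y)$ by hypothesis, and the second leg has $f\geq f(y')\geq f(y)$ — which by the defining relation for $\leq$ would give $\tilde{y}\leq\tilde{y}'$, contradicting the assumed non-comparability of $\tilde{y}$ and $\tilde{y}'$.

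The subtle point, and the step I expect to be the main obstacle, is matching up the representatives so that the concatenation is actually a legal poset path and the height bounds genuinely align at the splice point $x$. The relations $\tilde{x}>\tilde{y}$ and $\tilde{x}>\tilde{y}'$ are defined via \emph{some} choice of representatives, and I must ensure I can route both witnessing paths through a single common point whose $f$-value is $f(x)$; here I would use the fact, noted in the construction, that different representatives of an equivalence class are joined by a path on which $f$ is at least the common value, so the splice can be arranged without lowering $f$ below the relevant threshold. Care is also needed because a merging point formally requires $\tilde{x}$ to \emph{cover} $\tilde{y},\tilde{y}'$, but for the contradiction it suffices that $\tilde{x}$ strictly dominates two non-comparable elements, which is exactly the situation Lemma~\ref{mergingpoint} handles in reverse.

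Alternatively, I could prove the lower set $\{\tilde{z}:\tilde{z}\leq\tilde{x}\}$ is a chain directly: given $\tilde{y},\tilde{y}'\leq\tilde{x}$ I would run the same concatenation-through-$x$ argument to show one is $\leq$ the other, establishing totality of the lower set and hence the tree property by definition. Either route hinges on the same concatenation idea; I would present the merging-point version since Proposition~\ref{tree-merging} packages the equivalence cleanly and the height-bookkeeping is then localized to a single splice.
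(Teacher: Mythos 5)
Your proof is correct, and its core is exactly the paper's argument: splice the two witnessing paths at a common representative of the upper class (using well-definedness of the order to align representatives), assume WLOG $f(y)\leq f(y')$, and read off $\tilde{y}\leq\tilde{y}'$ from the definition of $\leq$ on $T_f(X)$. The paper runs this splice directly against the definition of a tree --- given $\tilde{x},\tilde{x}'\leq\tilde{y}$ it concludes they are comparable --- which is precisely the ``alternative'' route you describe at the end; your main packaging through merging points and Proposition~\ref{tree-merging} is a harmless but unnecessary detour, since your contradiction argument never uses the covering property of a merging point, only strict dominance by a common upper element, so it already establishes the tree condition verbatim.
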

\begin{proof}
Assume $\tilde{x},\tilde{x}' \leq \tilde{y}$. Let us show that $\tilde{x},\tilde{x}'$ are comparable.  WLOG assume that $f(x) \leq f(x')$. Let $\gamma$ (resp. $\gamma'$) be a poset path from $x$ (resp. $x'$) to $y$ giving $\tilde{x} \leq \tilde{y}$ (resp. $\tilde{x}' \leq \tilde{y}$). Then $\gamma \cdot \bar{\gamma}'$ is a path on which $f$ takes greater values than $f(x)$, hence $\tilde{x} \leq \tilde{x'}$.
\end{proof}

\begin{definition}[Reeb tree poset of $f: X \to \R$]
$(T_f(X),\leq,f)$ is called the \textit{Reeb tree poset} of $f: X \to \R$. We denote its Reeb metric by $t_f$. 
\end{definition}
\begin{Remark}
The minimal element of $T_f(X)$ is the equivalence class of $x$ where $f$ takes its minimal value.
\end{Remark}
\begin{Remark}\label{reeb-poset-tree-poset}
$T_f(R_f(X))=T_f(X)$ since the equivalence the relation used to define $R_f(X)$ is stronger than that of $T_f(X)$.
\end{Remark}
\begin{proposition}\label{reeb-tree-of-tree}
If $(T,\leq,f)$ is a Reeb tree poset, then $T_f(T)=T$.
\end{proposition}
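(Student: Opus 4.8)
The goal is to show that applying the Reeb tree poset construction to a poset that is already a Reeb tree poset returns the same poset, i.e. $T_f(T) = T$. This is an idempotency-type statement, entirely analogous to Remark~\ref{reeb-idempotent} for the Reeb poset construction, and the plan is to exploit the defining property of a tree together with the strictness of $f$.

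My plan is to show that the equivalence relation $\sim$ used to define $T_f(T)$ is trivial on $T$, so that each equivalence class is a singleton, and then that the induced order on $T_f(T)$ agrees with the original order on $T$. For the first part, suppose $x \sim x'$ in $T$, witnessed by a poset path $\gamma = (x_0,\dots,x_n)$ from $x$ to $x'$ along which $f(x_i) \geq \max(f(x),f(x'))$ for all $i$; recall this already forces $f(x)=f(x')$. I would argue that $x = x'$. The key observation is that since $T$ is a tree, any two elements comparable to (hence below, by the path bound) a common point interact rigidly. More directly, I expect to run an induction showing that every vertex $x_i$ of $\gamma$ is in fact comparable to $x$ with $x \leq x_i$: at each comparable pair $(x_{i-1},x_i)$, strictness of $f$ together with the inequality $f(x_i),f(x_{i-1}) \geq f(x)$ pins down the direction of comparability, and the tree property (elements below a given element form a chain) prevents the path from ever "escaping" below the level of $x$. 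Concluding $x' \geq x$ and symmetrically $x \geq x'$ gives $x = x'$, so $\tilde{x} = \{x\}$ and the quotient map is a bijection.

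Having identified the underlying sets, I would check that the order relations coincide. One inclusion is free: $X \to T_f(X)$ is always order preserving, so $x \leq y$ in $T$ implies $\tilde{x} \leq \tilde{y}$ in $T_f(T)$ (indeed the single-edge path $(x,y)$ has all values $\geq f(x)$). For the converse, suppose $\tilde{x} \leq \tilde{y}$, witnessed by a path $\gamma$ from $x$ to $y$ with $f(x_i) \geq f(x)$ for all $i$. I would again use the tree structure to show this forces $x \leq y$ in $T$: the same propagation argument shows every $x_i \geq x$, and in a tree the set of elements above a fixed lower bound, reached by such a path, collapses to a comparability with the endpoint $y$. Once both the sets and the orders match, $T_f(T) = T$ as posets, and since $f$ is preserved throughout, the identification respects the filtration as well.

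The main obstacle I anticipate is the rigidity argument for the path in the tree, namely justifying that a poset path staying at $f$-values $\geq f(x)$ cannot connect $x$ to a non-comparable element or to a strictly separate class. The cleanest route is probably to invoke the fence characterization: by Remark~\ref{distancemin-fence} one may take $\gamma$ to be a fence, and then Proposition~\ref{tree-char-fence}(ii) severely restricts fences in a tree (length at most two, of the shape $x>y<z$). A fence of the form $x>y<z$ sitting inside $\gamma$ would have its middle vertex $y$ at an $f$-value strictly below its neighbors, contradicting the standing constraint that all vertices of $\gamma$ have $f$-value at least $\max(f(x),f(x'))$ (or at least $f(x)$). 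This immediately forces $\gamma$ to be monotone, hence its endpoints comparable, which is exactly what both parts of the argument need; I would lean on this fence-based contradiction rather than a hand-rolled induction.
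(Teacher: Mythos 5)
Your proposal is correct and follows essentially the same route as the paper's proof: reduce the witnessing path to a fence, invoke Proposition~\ref{tree-char-fence} to force length at most two with the shape $x>y<z$, and use strictness of $f$ to rule out the length-two and length-one cases. If anything, you are slightly more thorough than the paper, which only verifies injectivity of the quotient map ($\tilde{x}=\tilde{y}\Leftrightarrow x=y$) and leaves the converse order comparison ($\tilde{x}\leq\tilde{y}\Rightarrow x\leq y$) implicit, whereas your fence argument addresses it explicitly.
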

\begin{proof}
As $T \to T_f(T)$ is order preserving, it is enough to show that $\tilde{x}=\tilde{y}$ if and only if $x=y$. Let $\gamma=(x_0,\dots,x_n)$ be a path from $x$ to $y$ giving $\tilde{x}=\tilde{y}$, i.e. $f(x_i)\geq f(x)=f(y)$ for each $i$. By removing elements from $\gamma$ if necessary, we can assume that $\gamma$ is a fence. By Proposition \ref{tree-char-fence}, $n$ is at most $2$. If $n=2$, then by Proposition \ref{tree-char-fence} $\gamma=(x >x_1<y)$, which is not possible by the strictness of $f$. If $n=1$, then $x,y$ are different but comparable, which is again not possible by the strictness of $f$. Hence $n=0$ and $x=y$.
\end{proof}

Now let us study properties of the Reeb tree metric $t_f$. Note that if $T$ is a tree poset and $x,y$ are points in $T$, than the intersection of the chains $\{z: z \leq x\}, \{z: z \leq x'\} $ is itself a chain hence it has a unique maximal element, which we denote by $p_{x,y}$.

\begin{proposition}\label{tree-metric}
Let $(T,f)$ be a Reeb tree poset. Then for all $x,y$ in $T$ $$t_f(x,y)=f(x)+f(y)-2f(p_{x,y}).$$
\end{proposition}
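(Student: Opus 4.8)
<br>

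The plan is to establish the formula $t_f(x,y) = f(x) + f(y) - 2f(p_{x,y})$ by proving two inequalities, exploiting the tree structure and the characterization of Reeb metrics via fences (Remark \ref{distancemin-fence}) together with the key geometric fact that $p_{x,y}$ is the meeting point of the two downward chains from $x$ and $y$.

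\medskip
\noindent
\textbf{The upper bound.} First I would exhibit an explicit poset path from $x$ to $y$ passing through $p:=p_{x,y}$ and compute its $f$-length. Since $p \leq x$ and $p \leq y$, both $p$ and $x$ are comparable, and $p$ and $y$ are comparable, so the triple $\gamma = (x, p, y)$ is a legitimate poset path in $\pospaths(x,y)$. Its length is
$$l_f(\gamma) = |f(x)-f(p)| + |f(p)-f(y)| = (f(x)-f(p)) + (f(y)-f(p)),$$
where the absolute values resolve because $f$ is order preserving and $p\leq x$, $p \leq y$ give $f(p)\leq f(x)$ and $f(p)\leq f(y)$. Hence $t_f(x,y) \leq l_f(\gamma) = f(x)+f(y)-2f(p)$, which is exactly the desired right-hand side.

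\medskip
\noindent
\textbf{The lower bound.} This is the step I expect to be the main obstacle, as it requires showing that no poset path from $x$ to $y$ can do better than routing through $p_{x,y}$. By Remark \ref{distancemin-fence}, I may assume a length-minimizing path is a fence $\gamma = (x_0,\dots,x_n)$ with $x_0 = x$, $x_n = y$. Since $T$ is a tree, Proposition \ref{tree-char-fence} forces the fence to have length at most two, and if the length is two it has the shape $x > x_1 < y$, i.e.\ $x_1$ is a common lower bound of $x$ and $y$ (the shape $x<x_1>y$ is excluded in a tree). The crucial point is then that any common lower bound $z$ of $x$ and $y$ satisfies $z \leq p_{x,y}$: indeed $z$ lies in both chains $\{w : w\leq x\}$ and $\{w : w \leq y\}$, hence in their intersection, whose maximal element is $p_{x,y}$ by definition. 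Therefore $f(x_1) = f(z) \leq f(p_{x,y})$, and the fence length is
$$l_f(\gamma) = (f(x)-f(x_1)) + (f(y)-f(x_1)) = f(x)+f(y)-2f(x_1) \geq f(x)+f(y)-2f(p_{x,y}).$$
The remaining cases are immediate: if $n=1$ then $x,y$ are comparable, say $x\leq y$, whence $p_{x,y}=x$ and $t_f(x,y)=|f(x)-f(y)|=f(y)-f(x)=f(x)+f(y)-2f(x)$ matches; and $n=0$ forces $x=y=p_{x,y}$ with both sides zero.

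\medskip
\noindent
Combining the two bounds yields the equality. The one subtlety I would be careful about is justifying that the minimizing fence really takes the claimed form in a tree: the monotone shape $x<x_1>y$ cannot occur because $x_1$ would then be a common upper bound of the non-comparable pair formed along the fence, contradicting the tree property, so the only two-step fence is the $V$-shape through a common lower bound. Once that structural fact is in hand, the identification of the optimal lower bound with $p_{x,y}$ via the chain-intersection characterization closes the argument.
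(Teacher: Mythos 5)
Your proof is correct, and it rests on the same ingredients as the paper's: Remark \ref{distance-difference} for comparable pairs, Remark \ref{distancemin-fence} to reduce to fences, and Proposition \ref{tree-char-fence} to pin down the shape of fences in a tree. The differences are worth noting, though. Organizationally, you argue via two inequalities (an explicit path $(x,p_{x,y},y)$ for the upper bound, a minimization over fences for the lower bound), whereas the paper splits into cases according to whether $x,y$ are comparable. More substantively, on the lower bound you are more careful than the paper itself: the paper asserts that $(x>p_{x,y}<y)$ is \emph{the only} fence between non-comparable $x,y$, which is not literally true --- every common lower bound $z$ of $x$ and $y$ yields a fence $(x>z<y)$, and there may be several. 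Your observation that any such $z$ lies in the intersection of the two down-chains, hence satisfies $z\leq p_{x,y}$ and therefore $f(z)\leq f(p_{x,y})$, is exactly what is needed to rule out these competing fences, and it closes a small imprecision in the paper's own argument. So: same route and same key lemmas, slightly different packaging, with a sharper justification of the step that identifies the optimal fence.
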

\begin{proof}
Without loss of generality $f(x)\leq f(y)$. If $x,y$ are comparable, then $p_{x,y}=x$ by the strictness of $f$. By Remark \ref{distance-difference} $t_f(x,y)=f(y)-f(x)=f(x)+f(y)-2f(p_{x,y})$. If $x,y$ are not comparable, then by Proposition \ref{tree-char-fence} $(x>p_x<y)$ is the only fence from $x$ to $y$. By Remarks \ref{distance-difference} and \ref{distancemin-fence}, $t_f(x,y)=f(x)-f(p_{x,y})+f(y)-f(p_{x,y})=f(x)+f(y)-2f(p_{x,y})$.
\end{proof}
Now, let us give a characterization of $f(p_{\tilde{x},\tilde{y}})$ for an order preserving map $f: X \to \R$. We first introduce the following definition:
\begin{definition}[Merge value $\posm_f$]
Let $f: X \to \R$ be an order preserving function. Define $$\posm_f(x,y):=\max_{\gamma \in \pospaths(x,y)} \min f \circ \gamma.$$
\end{definition}
\begin{proposition}\label{m-f}
Let $f: X \to \R$ be an order preserving map. Let the map $X \to T_f(X)$ be the map given by $x \mapsto \tilde{x}$. Let $p_{\tilde{x},\tilde{y}}$ be the maximal element in tree $T_f(X)$ which is less than or equal to both $\tilde{x},\tilde{y}$. Then
$$f(p_{\tilde{x},\tilde{y}})=\posm_f(x,y) .$$
We denote the right hand side of the equality above by $\posm_f(x,y)$.
\end{proposition}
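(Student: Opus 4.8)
The plan is to prove the equality by establishing the two inequalities $f(p_{\tilde x,\tilde y}) \le \posm_f(x,y)$ and $f(p_{\tilde x,\tilde y}) \ge \posm_f(x,y)$ separately, each time translating between the order relation on the tree $T_f(X)$ and the existence of poset paths along which $f$ stays above a prescribed level. Before anything, I would record that the maximum defining $\posm_f(x,y)$ is attained: since $X$ is finite, $\{\min f\circ\gamma : \gamma \in \pospaths(x,y)\}$ is a finite subset of $f(X)$, so I may fix a path $\gamma^*=(x_0,\dots,x_n)\in\pospaths(x,y)$ with $\min f\circ\gamma^*=\posm_f(x,y)=:m$; thus $f(x_i)\ge m$ for all $i$, with equality for at least one index.

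For the inequality $\posm_f(x,y)\ge f(p_{\tilde x,\tilde y})$, write $p:=p_{\tilde x,\tilde y}$ and pick a representative $p_0\in X$ with $\widetilde{p_0}=p$. Since $p\le\tilde x$ and $p\le\tilde y$ in $T_f(X)$, the definition of the order on $T_f(X)$ supplies paths $A\in\pospaths(p_0,x)$ and $B\in\pospaths(p_0,y)$ along which $f\ge f(p_0)=f(p)$. Concatenating the inverse path $\bar A$ with $B$ produces $\bar A\cdot B\in\pospaths(x,y)$ on which $f\ge f(p)$ everywhere, so $\min f\circ(\bar A\cdot B)\ge f(p)$; taking the maximum over all paths gives $\posm_f(x,y)\ge f(p)$.

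For the reverse inequality I would exploit the maximizing path $\gamma^*$. Let $x_k$ be a vertex of $\gamma^*$ realizing the minimum, so $f(x_k)=m$ while $f\ge m=f(x_k)$ along all of $\gamma^*$. The portion of $\gamma^*$ between $x$ and $x_k$ then witnesses $\widetilde{x_k}\le\tilde x$, and the portion between $x_k$ and $y$ witnesses $\widetilde{x_k}\le\tilde y$, again directly from the definition of $\le$ on $T_f(X)$ (orienting each sub-path away from $x_k$ and using that $f\ge f(x_k)$ on it). Hence $\widetilde{x_k}$ is a common lower bound of $\tilde x$ and $\tilde y$, and by maximality of $p=p_{\tilde x,\tilde y}$ among such lower bounds we obtain $\widetilde{x_k}\le p$. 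Since $f$ is strictly order preserving on $T_f(X)$, this yields $m=f(x_k)\le f(p)$, and combined with the previous paragraph we conclude $f(p_{\tilde x,\tilde y})=\posm_f(x,y)$.

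The argument amounts to two applications of the correspondence ``order relation in the tree $T_f(X)$ $\leftrightarrow$ poset path staying above the lower level'', so the only points demanding care are bookkeeping: that the maximum in $\posm_f$ is attained (handled by finiteness of $X$), and that the order on $T_f(X)$ is invoked in the correct direction when passing from $p\le\tilde x$ to a connecting path and back from the level set of $\gamma^*$ to $\widetilde{x_k}\le p$. I do not anticipate a substantive obstacle; the content is the familiar fact that the bottleneck (max--min) value between two points coincides with the height of their meet in the associated merge/Reeb tree.
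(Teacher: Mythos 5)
Your proof is correct and follows essentially the same route as the paper's: one inequality by concatenating the two paths that witness $p_{\tilde x,\tilde y}\le\tilde x$ and $p_{\tilde x,\tilde y}\le\tilde y$ through a representative of $p_{\tilde x,\tilde y}$, and the other by taking the $f$-minimizing point on a path realizing $\posm_f(x,y)$, observing it is a common lower bound of $\tilde x,\tilde y$ in $T_f(X)$, and invoking maximality of the meet. Your write-up is a bit more careful than the paper's (explicit attainment of the maximum via finiteness, explicit orientation of sub-paths), but the content is identical; note also that in the last step plain order preservation of $f$ on $T_f(X)$ suffices, strictness is not needed.
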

\begin{proof}
Let $p_{x,y}$ be a point in the preimage of $p_{\tilde{x},\tilde{y}}$ in $X$. Let $\gamma$ be a path realizing $\posm_f(x,y)$ and $q_{x,y}$ be the point on it where $f$ takes its minimal value. Note that $ \tilde{q}_{x,y} \leq \tilde{x},\tilde{y}$, hence by the definition of $p_{\tilde{x},\tilde{y}}$ we have $\tilde{q}_{x,y} \leq p_{\tilde{x},\tilde{y}}$. So $f(q_{x,y})=f(\tilde{q}_{x,y})\leq f(p_{\tilde{x},\tilde{y}})$. Note that there are paths $\gamma \in \pospaths(x,p_{x,y}), \gamma' \in \pospaths (p_{x,y},y)$ such that on both curves $f$ takes values greater than or equal to $p_{x,y}$. Hence $f(q_{x,y}) \geq \min f \circ (\gamma\cdot\gamma')= f(p_{x,y})=f(p_{\tilde{x},\tilde{y}})$.
\end{proof}
As a corollary of Proposition \ref{tree-metric},\ref{m-f}, we have the following.

\begin{corollary}\label{reeb-tree-metric}
Let $f:X \to \R$ be an order preserving map. Then $$t_f(\tilde{x},\tilde{y})=f(x)+f(y)-2\,\posm_f(x,y).$$ 
\end{corollary}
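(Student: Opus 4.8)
The plan is to combine the two results we have just proven, Proposition \ref{tree-metric} and Proposition \ref{m-f}, since together they directly yield the claimed formula. The corollary is a statement about the Reeb tree metric $t_f$ evaluated on the equivalence classes $\tilde{x},\tilde{y}$ inside the tree $T_f(X)$, and both ingredients are already phrased in exactly the right terms.

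First I would recall that by definition $(T_f(X),\leq,f)$ is a Reeb tree poset (this is the content of Proposition \ref{reebtree} together with the strictness of $f$ noted in the surrounding discussion). Hence Proposition \ref{tree-metric} applies to the tree $(T_f(X),f)$ with the two points $\tilde{x},\tilde{y}$, giving
\begin{equation*}
t_f(\tilde{x},\tilde{y})=f(\tilde{x})+f(\tilde{y})-2f(p_{\tilde{x},\tilde{y}}),
\end{equation*}
where $p_{\tilde{x},\tilde{y}}$ is the maximal common lower bound of $\tilde{x}$ and $\tilde{y}$ in $T_f(X)$.

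Next I would rewrite each term on the right in terms of data on $X$. Since the quotient map $X \to T_f(X)$ preserves the value of $f$ (as noted, $f$ is well defined on $T_f(X)$ and constant on equivalence classes), we have $f(\tilde{x})=f(x)$ and $f(\tilde{y})=f(y)$. For the remaining term, Proposition \ref{m-f} identifies $f(p_{\tilde{x},\tilde{y}})$ precisely as $\posm_f(x,y)$. Substituting these three identities into the displayed formula gives
\begin{equation*}
t_f(\tilde{x},\tilde{y})=f(x)+f(y)-2\,\posm_f(x,y),
\end{equation*}
which is exactly the assertion.

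Since both Propositions have already been established, there is no genuine obstacle here; the corollary is a one-line consequence obtained by chaining the two formulas. The only point requiring a moment of care is the bookkeeping of where each quantity lives: $t_f$ and $p_{\tilde{x},\tilde{y}}$ are objects in the tree $T_f(X)$, whereas $\posm_f(x,y)$ is computed over poset paths in the original poset $X$, and one must invoke the $f$-invariance of the quotient map to pass between $f(\tilde{x})$ and $f(x)$. Everything else is direct substitution.
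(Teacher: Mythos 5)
Your proposal is correct and matches the paper's approach exactly: the paper states this corollary as an immediate consequence of Propositions \ref{tree-metric} and \ref{m-f}, which is precisely the chaining of formulas you carry out. Your added bookkeeping (that $f$ descends to the quotient, so $f(\tilde{x})=f(x)$, and that $(T_f(X),f)$ is indeed a Reeb tree poset by Proposition \ref{reebtree}) only makes explicit what the paper leaves implicit.
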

\begin{Remark}\label{fence-mf}
$\posm_f(x,y)$ can be realized by a fence since if $\gamma$ is a poset path realizing $\posm_f(x,y)$, we can remove points until it becomes a fence without decreasing the minimal $f$-value.
\end{Remark}

\section{Hyperbolicity for Reeb posets}\label{reeb-hyperbolicity}

Metric hyperbolicity is a metric invariant which determines if a metric space is metric tree or not \cite{g87,bbi01}. In this section, we introduce a similar invariant which determines if a Reeb poset is a Reeb tree poset or not.

\begin{definition}(Gromov product for Reeb posets)
Let $(R,f)$ be a Reeb poset. We define the \textit{Gromov product} $$\posgp_f(x,y)=\frac{f(x)+f(y)-d_f(x,y)}{2}. $$
\end{definition}

\begin{definition}(Hyperbolicity for Reeb posets)
Let $(R,f)$ be a Reeb poset. We define the \textit{hyperbolicity} $\poshyp_f(R)$ as the minimal $\epsilon \geq 0$ such that for each $x,y,z$ in $X$, $$\posgp_f(x,z) \geq \min(\posgp_f(x,y),\posgp_f(y,z)) - \epsilon.$$
\end{definition}
The main statement of this section is the following.
\begin{proposition}\label{treeposet-hyp}
A Reeb poset $(T,f)$ is a Reeb tree if and only if $\poshyp_f(T)=0$. 
\end{proposition}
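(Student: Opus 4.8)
The plan is to prove both implications by first extracting, in the tree case, an explicit description of the Gromov product, and then handling the converse by contraposition with a single well-chosen triple.

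\textbf{Forward direction ($T$ a tree $\Rightarrow \poshyp_f(T)=0$).} First I would note that on a Reeb tree poset the Gromov product has a transparent meaning. Since for a Reeb tree poset the Reeb metric $d_f$ coincides with $t_f$, Proposition \ref{tree-metric} gives $d_f(x,y)=f(x)+f(y)-2f(p_{x,y})$, where $p_{x,y}$ is the meet of $x$ and $y$. Substituting into the definition of $\posgp_f$ collapses everything to
$$\posgp_f(x,y)=f(p_{x,y}).$$
Thus proving $\poshyp_f(T)=0$ reduces to verifying $f(p_{x,z})\ge\min\!\big(f(p_{x,y}),f(p_{y,z})\big)$ for every triple. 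The combinatorial input is that $p_{x,y}$ and $p_{y,z}$ both lie below $y$, so they belong to the chain $\{z:z\le y\}$ and are therefore comparable. Letting $m$ denote whichever of the two has the smaller $f$-value, a direct check shows $m\le x$ and $m\le z$ in either case, so $m\le p_{x,z}$ and hence $f(p_{x,z})\ge f(m)=\min\!\big(f(p_{x,y}),f(p_{y,z})\big)$, which is exactly the required inequality with $\epsilon=0$.

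\textbf{Converse ($\poshyp_f(T)=0 \Rightarrow T$ a tree).} Here I would argue by contraposition: assuming $T$ is not a tree, I would exhibit one triple violating the defining inequality, forcing $\poshyp_f(T)>0$. By the very definition of a tree poset, failure of the tree condition means there are non-comparable elements $u,v$ with a common upper bound $w$; non-comparability forces $w>u$ and $w>v$ strictly. I would then test $(x,y,z)=(u,w,v)$, placing the common upper bound in the middle slot. Since $u<w$ and $v<w$ are comparable, Remark \ref{distance-difference} yields $d_f(u,w)=f(w)-f(u)$ and $d_f(w,v)=f(w)-f(v)$, whence $\posgp_f(u,w)=f(u)$ and $\posgp_f(w,v)=f(v)$. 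The decisive step is the pair $u,v$: because they are \emph{non-comparable}, the strictness clause of Remark \ref{distance-difference} gives $d_f(u,v)>|f(u)-f(v)|$, which rearranges to $\posgp_f(u,v)<\min\!\big(f(u),f(v)\big)=\min\!\big(\posgp_f(u,w),\posgp_f(w,v)\big)$. So the four-point inequality fails at $(u,w,v)$ and therefore $\poshyp_f(T)>0$.

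\textbf{Main obstacle.} I expect the forward direction to be essentially bookkeeping once $\posgp_f=f(p_{x,y})$ is in hand. The genuine content is the converse, and the crux is selecting the correct triple: orderings that put one of the \emph{lower} elements in the middle slot do not violate the inequality, and one must recognize that the common upper bound $w$ has to be the middle point $y$. Once this is seen, the only analytic ingredient required is the strictness clause of Remark \ref{distance-difference}, namely that non-comparable points are strictly farther apart than their $f$-difference; no estimation involving $\posm_f$ or length-minimizing fences is needed.
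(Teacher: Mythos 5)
Your proof is correct and follows essentially the same route as the paper's: the forward direction uses the identity $\posgp_f(x,y)=f(p_{x,y})$ together with the comparability of $p_{x,y}$ and $p_{y,z}$ below $y$, and your converse is just the contrapositive of the paper's argument, testing the triple with the common upper bound in the middle slot and invoking the strict-inequality characterization of comparability (your use of Remark \ref{distance-difference} is exactly the content of the paper's Lemma \ref{gp-vs-f}).
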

We give the proof after some remarks and lemmas.
\begin{Remark}\label{gp-comparable}
If $x \leq y$, then by Remark \ref{distance-difference} $g_f(x,y)=(f(x)+f(y)-(f(y)-f(x))/2=f(x).$
\end{Remark}
\begin{lemma}\label{gp-vs-f}
Let $(R,f)$ be a Reeb poset. Then, $g_f(x,y)\leq \min(f(x),f(y))$ for all $x,y$ in $R$. Equality happens if and only if $x,y$ are comparable.
\end{lemma}
\begin{proof}
Without loss of generality assume that $f(x)\leq f(y)$. Then,  by Remark \ref{distance-difference},  $$\posgp_f(x,y)-f(x)=(f(y)-f(x)-d_f(x,y))/2 \leq 0$$and equality happens if and only if $x,y$ are comparable.
\end{proof}

\begin{Remark}\label{treeposet-product}
If $(T,f)$ is a tree poset and $p_{x,y}$ is the maximal point which is smaller than both $x,y$. Then, by Proposition \ref{tree-metric}, $$ \posgp_f(x,y) =\frac{1}{2}(f(x)+f(y) - (f(x)+f(y)-2(p_{x,y})) )=f(p_{x,y}).$$
\end{Remark}

\begin{proof}[Proof of Proposition \ref{treeposet-hyp}]
``$\implies$'' Let $x,y,z$ be points in $T$. Then $p_{x,y},p_{y,z}$ are comparable since they are both less than $y$ and furthermore $p_{x,z} \geq \min(p_{x,y},p_{y,z})$ since that minimum is less than or equal to both $x$ and $z$. Therefore,  by Remark \ref{treeposet-product}
\[\posgp_f(x,z)=f(p_{x,z}) \geq \min(f(p_{x,y}),f(p_{y,z}))=\min(\posgp_f(x,y),\posgp_f(y,z)). \]
``$\impliedby$''  Assume $y\geq x,z$. Let us show that $x,z$ are comparable. By Lemma \ref{gp-vs-f} and Remark \ref{gp-comparable}, we have
$\min(f(x),f(z)) \geq \posgp_f(x,z) \geq \min(\posgp_f(x,y),\posgp_f(y,z))= \min(f(x),f(z)). $
Hence $\posgp_f(x,z)=\min(f(x),f(z))$ and by Lemma \ref{gp-vs-f} $x,z$ are comparable.

\end{proof}

\section{Approximation}\label{approximation}

In this section we consider tree approximations of Reeb posets. Our approximation result includes the following poset invariant.

\begin{definition}[Maximal fence length $M_F$]
Given a poset $X$, we define $$M_F(X):= \max_{F \text{ a fence in }X} |F|-1,$$
where $|F|$ is the number of elements in $F$. 
\end{definition}

\begin{theorem}\label{main}
Let $(R,f)$ be a Reeb poset. Let $\pi:R\rightarrow T_f(R)$ be the projection map. Then $$ |d_f(x,x')-t_f(\pi(x),\pi(x'))| \leq 2\,\log(2\,M_F(R))\,\poshyp_f(R).$$ Here we are considering logarithm base $2$.
\end{theorem}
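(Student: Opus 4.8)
The plan is to reduce the two-sided estimate to a comparison between the merge value $\posm_f$ and the Gromov product $\posgp_f$, and then to control their difference by a dyadic ``chain'' argument along a fence. First I would rewrite both metrics in a common form: by Corollary \ref{reeb-tree-metric}, $t_f(\pi(x),\pi(x')) = f(x)+f(x') - 2\,\posm_f(x,x')$, while the definition of the Gromov product gives $d_f(x,x') = f(x)+f(x') - 2\,\posgp_f(x,x')$. Subtracting,
\[
d_f(x,x') - t_f(\pi(x),\pi(x')) = 2\big(\posm_f(x,x') - \posgp_f(x,x')\big),
\]
so it suffices to prove $0 \le \posm_f(x,x') - \posgp_f(x,x') \le \log(2\,M_F(R))\,\poshyp_f(R)$.

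For the lower bound $\posgp_f \le \posm_f$, I would argue path-by-path. Given any $\gamma=(x_0,\dots,x_n)\in\pospaths(x,x')$, let $k$ be an index where $f\circ\gamma$ attains its minimum. Splitting $\gamma$ at $x_k$ and using that the $f$-length of each half dominates the corresponding $f$-gap, one gets $l_f(\gamma)\ge (f(x)-\min f\circ\gamma)+(f(x')-\min f\circ\gamma)$, that is $\min f\circ\gamma \ge \tfrac{1}{2}(f(x)+f(x')-l_f(\gamma))$. Taking the maximum over all $\gamma$ turns the right-hand side into $\posgp_f(x,x')$ and the left-hand side into $\posm_f(x,x')$, giving $\posm_f(x,x')\ge \posgp_f(x,x')$.

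The upper bound is where the hyperbolicity enters. By Remark \ref{fence-mf} I may choose a fence $\gamma=(x_0,\dots,x_n)$ realizing $\posm_f(x,x')$, so that $\min f\circ\gamma = \posm_f(x,x')$ and, since $\gamma$ is a fence, $n = |\gamma|-1 \le M_F(R)$. Consecutive vertices $x_{i-1},x_i$ are comparable, so by Remark \ref{gp-comparable} we have $\posgp_f(x_{i-1},x_i)=\min(f(x_{i-1}),f(x_i))$; hence $\min_{i}\posgp_f(x_{i-1},x_i) = \min_i f(x_i) = \posm_f(x,x')$. It then remains to establish the standard Gromov ``product along a chain'' inequality
\[
\posgp_f(x_0,x_n) \ge \min_{1\le i \le n}\posgp_f(x_{i-1},x_i) - \poshyp_f(R)\,\lceil\log_2 n\rceil,
\]
which I would prove by induction on $n$: split the chain at $x_{\lfloor n/2\rfloor}$, apply the defining inequality of $\poshyp_f$ once to the triple $(x_0, x_{\lfloor n/2\rfloor}, x_n)$, and invoke the inductive hypothesis on the two halves. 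Combining this with the identification above yields $\posm_f(x,x') - \posgp_f(x,x') \le \poshyp_f(R)\,\lceil\log_2 n\rceil \le \poshyp_f(R)\,\log(2\,M_F(R))$, the last step absorbing the ceiling into the factor $2$ inside the logarithm.

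The main obstacle is the chain inequality together with the bookkeeping of its logarithmic constant: the dyadic recursion must be balanced so that each of the $\lceil\log_2 n\rceil$ levels contributes exactly one factor of $\poshyp_f(R)$, and one must verify $\lceil \log_2 n\rceil \le \log_2(2\,M_F(R))$ so that the clean bound $\log(2\,M_F(R))$ survives for every fence length $n\le M_F(R)$. Everything else is either a direct rewriting (first paragraph) or an elementary telescoping estimate (second paragraph).
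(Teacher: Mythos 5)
Your proposal is correct and follows essentially the same route as the paper: the reduction to $\posm_f - \posgp_f$ via Corollary \ref{reeb-tree-metric}, the inequality $\posm_f \geq \posgp_f$ (the paper's Lemma \ref{lipschitz}), and the dyadic chain inequality applied along a fence realizing $\posm_f$ (the paper's Lemma \ref{log-hyp}) are precisely the steps of the paper's proof, with the same bookkeeping $\lceil \log_2 n \rceil \leq \log_2(2\,M_F(R))$. The only differences are cosmetic: your path-by-path derivation of $\posm_f \geq \posgp_f$ (which, incidentally, states the key inequality with the correct orientation, whereas the paper's write-up of Lemma \ref{lipschitz} has a sign slip) and your choice of splitting point $\lfloor n/2 \rfloor$ rather than $\lceil n/2 \rceil$ in the induction.
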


We first prove following two lemmas.

\begin{lemma}\label{lipschitz}
Let $f: X \to \R$ be an order preserving map. Then $\posm_f(x,y) \geq \posgp_f(x,y).$
\end{lemma}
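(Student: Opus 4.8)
The plan is to reduce everything to a single pathwise inequality and then pass to the extrema. Recall that $\posm_f(x,y)=\max_{\gamma\in\pospaths(x,y)}\min f\circ\gamma$, while $\posgp_f(x,y)=\tfrac12\big(f(x)+f(y)-d_f(x,y)\big)$ with $d_f(x,y)=\min_{\gamma}l_f(\gamma)$; both extrema are attained since $X$ is finite (every poset path shortens to a simple one without increasing $l_f$, and there are only finitely many simple paths). Thus it suffices to prove that \emph{every} $\gamma\in\pospaths(x,y)$ satisfies $\min f\circ\gamma \geq \tfrac12\big(f(x)+f(y)-l_f(\gamma)\big)$. Granting this, I apply it to a length-minimizing path $\gamma^*$, for which $l_f(\gamma^*)=d_f(x,y)$, and obtain $\posm_f(x,y)\geq \min f\circ\gamma^* \geq \tfrac12\big(f(x)+f(y)-d_f(x,y)\big)=\posgp_f(x,y)$, which is the claim.

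The key step, where all the content lies, is to cut the path at a vertex realizing its minimum $f$-value. Fix $\gamma=(x_0,\dots,x_n)$ with $x_0=x$, $x_n=y$, set $m:=\min_i f(x_i)$, and pick an index $k$ with $f(x_k)=m$. Splitting the sum defining $l_f(\gamma)$ at $x_k$ and applying the triangle inequality (telescoping) to each block yields $\sum_{i=1}^{k}|f(x_i)-f(x_{i-1})|\geq |f(x_k)-f(x_0)|=f(x)-m$ and likewise $\sum_{i=k+1}^{n}|f(x_i)-f(x_{i-1})|\geq f(y)-m$, where I have used that $f(x),f(y)\geq m$ by the definition of $m$. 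Adding these two bounds gives $l_f(\gamma)\geq f(x)+f(y)-2m$, equivalently $m=\min f\circ\gamma\geq \tfrac12\big(f(x)+f(y)-l_f(\gamma)\big)$, exactly the pathwise inequality required.

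I do not anticipate any genuine obstacle: the only point deserving a word of care is the passage from the per-path estimate to the global one, which is harmless because maximizing the lower bound $\tfrac12\big(f(x)+f(y)-l_f(\gamma)\big)$ over $\gamma$ is the same as minimizing $l_f(\gamma)$, whose value is precisely $d_f(x,y)$. The one conceptual ingredient is the decision to break the path at its lowest point and bound each half by the net monotone drop of $f$; everything else is the triangle inequality and bookkeeping.
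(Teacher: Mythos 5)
Your proof is correct and is essentially the paper's own argument: both split a length-minimizing path at the vertex where $f$ attains its minimum and bound each half below by the net drop of $f$ (the paper does this by invoking Remark \ref{distance-difference} on the two subpaths, which is exactly your telescoping step). The only cosmetic difference is that you establish the pathwise bound for every $\gamma$ before specializing to the minimizer, whereas the paper works with the minimizing path directly.
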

\begin{proof}
Let $\gamma$ in $\pospaths(x,y)$ be the path realizing $d_f(x,y)$. Let $z$ be the point on $\gamma$ where $f$ takes its minimal. Then by Remark \ref{distance-difference} we have $$d_f(x,y)=d_f(x,z)+d_f(z,y) \leq f(x)-f(z)+f(y)-f(z)=f(x)+f(y)-2f(z).$$ Hence we have
$\posgp_f(x,y)=(f(x)+f(y)-d_f(x,y))/2 \leq f(z) = \min f\circ \gamma \leq \posm_f(x,y). $
\end{proof}
\begin{lemma}\label{log-hyp}
Let $f: X \to \R$ be an order preserving map and $x_0,\dots,x_n$ be a family of elements in a poset $X$. Then 
$$\posgp_f(x_0,x_n) \geq \min_i \posgp_f(x_i,x_{i+1}) - \lceil \log n \rceil \, \poshyp_f(X).$$
\end{lemma}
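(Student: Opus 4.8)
The statement is a standard "logarithmic telescoping" inequality for the four-point/hyperbolicity condition, and the natural strategy is a divide-and-conquer induction on $n$. The defining inequality of $\poshyp_f(X)$ gives the three-point estimate $\posgp_f(x,z) \geq \min(\posgp_f(x,y),\posgp_f(y,z)) - \poshyp_f(X)$, which is exactly the base case $n=2$ (with $\lceil \log 2\rceil = 1$). The plan is to split the chain $x_0,\dots,x_n$ at a midpoint and apply the induction hypothesis to each half, paying one extra factor of $\poshyp_f(X)$ to glue the two halves together with the three-point inequality. Because splitting in half is what makes $\log n$ (rather than $n$) appear, I would choose the midpoint index $m=\lceil n/2\rceil$ so that each of the two subchains has length at most $\lceil n/2\rceil$, and I would set up the induction on $n$ in the strong form "for all chains of length $\leq n$."

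First I would fix $m = \lceil n/2\rceil$ and apply the three-point hyperbolicity inequality to the triple $(x_0, x_m, x_n)$:
\begin{equation*}
\posgp_f(x_0,x_n) \geq \min\bigl(\posgp_f(x_0,x_m),\,\posgp_f(x_m,x_n)\bigr) - \poshyp_f(X).
\end{equation*}
Then I would invoke the induction hypothesis separately on the subchain $x_0,\dots,x_m$ (length $m \leq \lceil n/2\rceil$) and on $x_m,\dots,x_n$ (length $n-m \leq \lceil n/2\rceil$), obtaining
\begin{equation*}
\posgp_f(x_0,x_m) \geq \min_{0\leq i<m}\posgp_f(x_i,x_{i+1}) - \lceil \log \lceil n/2\rceil \rceil\,\poshyp_f(X)
\end{equation*}
and the analogous bound for $\posgp_f(x_m,x_n)$. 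Taking the minimum of the two right-hand sides only enlarges the collection of consecutive pairs, so both are bounded below by $\min_i \posgp_f(x_i,x_{i+1}) - \lceil \log \lceil n/2\rceil \rceil\,\poshyp_f(X)$ over all $0\leq i<n$.

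Combining these, I would get
\begin{equation*}
\posgp_f(x_0,x_n) \geq \min_i \posgp_f(x_i,x_{i+1}) - \bigl(\lceil \log \lceil n/2\rceil \rceil + 1\bigr)\,\poshyp_f(X),
\end{equation*}
so the whole argument reduces to the purely arithmetic fact that $\lceil \log \lceil n/2\rceil \rceil + 1 \leq \lceil \log n \rceil$. I expect this ceiling/logarithm bookkeeping to be the only genuinely delicate point: one must verify $\lceil \log\lceil n/2\rceil\rceil \leq \lceil \log n\rceil - 1$ for all $n\geq 2$, which follows from $\lceil n/2\rceil \leq 2^{\lceil \log n\rceil - 1}$, itself a consequence of $n \leq 2^{\lceil \log n\rceil}$. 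It is worth being slightly careful about small $n$ and about whether $n$ is a power of two, but no geometry or poset theory enters here — the hyperbolicity inequality is used only as the black-box three-point estimate, and everything else is induction plus an integer-logarithm inequality. The main obstacle, such as it is, is simply organizing the induction in its strong form and nailing the ceiling arithmetic cleanly.
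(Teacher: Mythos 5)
Your proposal is correct and follows essentially the same route as the paper: a divide-and-conquer induction that splits the chain at $k=\lceil n/2\rceil$, applies the three-point hyperbolicity inequality to $(x_0,x_k,x_n)$, invokes the inductive hypothesis on both halves, and closes with the ceiling-logarithm identity $\lceil \log\lceil n/2\rceil\rceil + 1 = \lceil \log n\rceil$ (the paper states it as $\lceil \log 2k\rceil = \lceil \log n\rceil$, which is the same bookkeeping you verify). No gaps; your explicit strong-induction formulation is if anything slightly cleaner than the paper's phrasing.
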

\begin{proof}
Let us prove by induction on $n$. The case $n=1$ is trivial and $n=2$ follows from the definition of $\poshyp_f$. Now assume that the statement is true up to $n$ and $n > 3$. Let $k=\lceil n/2 \rceil$, then $k \geq n-k=\lfloor n/2 \rfloor$ and $\lceil \log 2k \rceil = \lceil \log n \rceil $. By the inductive hypothesis we have
\begin{align*} \posgp_f(x_0,x_n) &\geq \min(\posgp_f(x_0,x_k),\posgp_f(x_k,x_n)) - \poshyp_f(X)\\
&\geq \min_i \posgp_f(x_i,x_{i+1}) - (\lceil \log k \rceil + 1)\, \poshyp_f(X)\\
&= \min_i \posgp_f(x_i,x_{i+1}) - \lceil \log n \,\rceil \poshyp_f(X). 
\end{align*}
\end{proof}
Now, we can give the proof of Theorem \ref{main}. 
\begin{proof}[Proof of Theorem \ref{main}]
Let $\pi: R \to T_f(R)$ be the quotient map $x \mapsto \tilde{x}$. Since $\pi$ is surjective, it is enough to prove that the metric distortion $\dis(\pi)=\max_{x,y \in R}|d_f(x,y)-t_f(\tilde{x},\tilde{y})|$ satisfies $$\dis(\pi) \leq 2\log(2\,M_F(R))\,\poshyp_f(R).$$ By Corollary \ref{reeb-tree-metric}, we have
\[d_f(x,y)-t_f(x,y)=d_f(x,y) - \big(f(x)+f(y)-2\,\posm_f(x,y)\big)=2\big(\posm_f(x,y)-\posgp_f(x,y)\big). \]Hence, by Lemma \ref{lipschitz}, it is enough to show that for each $x,y$ in $X$, we have $$\posm_f(x,y)-\posgp_f(x,y) \leq \log(2\,M_F(R)) \,\poshyp_f(R). $$
By Remark \ref{fence-mf}, there exists a fence $(x=x_0,\dots,x_n=y)$ which realizes $\posm_f(x,y)$, i.e. $\posm_f(x,y)=\min_i f(x_i)$. Since $x_i,x_{i+1}$ are comparable, by Remark \ref{gp-comparable}, $\posgp_f(x_i,x_{i+1})=\min(f(x_i),f(x_{i+1})).$ Therefore, $\posm_f(x,y)=\min_i \posgp_f(x_i,x_{i+1}).$ Since $n \leq M_F(R)$, by Lemma \ref{log-hyp} 
$\posm_f(x,y) - \posgp_f(x,y) \leq \lceil \log n \, \rceil \poshyp_f(X) \leq \log(2\,M_F(R)) \, \poshyp_f(X). $
\end{proof}

\section{An application to metric graphs and finite metric spaces}\label{application}

In this section, we show how our poset theoretic ideas can be used to prove certain results for metric graphs and finite metric spaces. In particular we show that a metric graph naturally induces a poset with an order preserving filtration given by a distance function, and the induced metric from this filtration coincides with the original one. These observations makes our results for filtered posets applicable to graphs.

\begin{definition}[$p$-regularity]
Let $G=(V,E,l)$ be a simple metric graph with the length structure $l$ and $p$ be a vertex. We call $G$ $p$-\textit{regular} if it satisfies the following:
\begin{enumerate}[i)]
\item Each edge $(v,w)$ satisfies $l(v,w)=d_l(v,w)=|d_l(p,v)-d_l(p,w)|,$
\item Edges are the only length minimizing paths between their endpoints.
\end{enumerate}
\end{definition}
Note that by Proposition \ref{subdivision}, each metric graph can be extended by adding at most one vertex from the geometric realization of each edge so that the property ``i)" is satisfied. We can further extend the vertex set by adding the midpoints of edges which are not the only length minimizing path between their endpoints. After this extension, property ``ii)" is also satisfied.

In this section we prove the following:

\begin{theorem}\label{main-graph}
Let $G=(V,E,l)$ be a $p$-regular metric graph with the first Betti number $\beta$. Then there exists a tree metric space $(T,t_T)$ and a surjective map $\pi:X\rightarrow T$ such that 
$$\max_{x,x'\in V}|d_l(x,x')-t_T(\pi(x),\pi(x'))| \leq 2\,\log(4\beta+4)\,\hyp(V,d_l),$$
where $d_l$ is the metric induced by the length structure.
\end{theorem}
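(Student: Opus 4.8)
The plan is to reduce Theorem~\ref{main-graph} to the already-proved Theorem~\ref{main} by exhibiting the metric graph $G$ as (essentially) the Reeb metric of a suitable filtered poset, and then matching up all the invariants. First I would fix the basepoint $p$ and use the distance function $r:=d_l(p,\cdot):V\to\R$ as the filtration. The $p$-regularity hypothesis is exactly what makes this work: property (i) says every edge $(v,w)$ satisfies $l(v,w)=|r(v)-r(w)|$, so orienting each edge from its lower $r$-value endpoint to its higher one turns $(V,E)$ into the covering graph of a poset $X$ on $V$, where $w$ covers $v$ precisely when $(v,w)\in E$ and $r(v)<r(w)$. Since $r$ strictly increases along each oriented edge, $r$ is a strictly order-preserving filtration, so $(X,r)$ is a \emph{Reeb poset} in the sense of the paper, and its canonical Reeb length structure $l_r$ assigns each edge exactly its length $l(v,w)=|r(v)-r(w)|$. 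The first thing to check carefully is that the induced Reeb metric $d_r$ on $X$ agrees with the graph metric $d_l$ on $V$: by Remark~\ref{metric-covering-graph} the covering graph of $(X,r)$ is isometric to $G$ as a metric graph, so distances between vertices coincide, i.e. $d_r=d_l$ on $V$. (This is where property (ii), that edges are the unique length-minimizing paths, is used, to rule out spurious shortcuts and guarantee the poset-path distance equals the graph distance.)

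Once $d_r=d_l$ is established, I would apply Theorem~\ref{main} to the Reeb poset $(X,r)$ with its projection $\pi:X\to T_r(X)$. This immediately yields a tree poset $T_r(X)$ with its Reeb tree metric $t_r$ and the bound
\[
\max_{x,x'\in V}\bigl|d_l(x,x')-t_r(\pi(x),\pi(x'))\bigr|\le 2\,\log\bigl(2\,M_F(X)\bigr)\,\poshyp_r(X).
\]
The tree metric space $(T,t_T)$ of the statement is then taken to be the Reeb tree poset $T_r(X)$ with its Reeb metric, which is a genuine tree metric space because a Reeb tree poset is a tree and its Reeb metric is a tree metric (this is the content of Proposition~\ref{treeposet-hyp} together with the equivalence between $\poshyp=0$ and being a Reeb tree). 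So the structural part of the theorem drops out of Theorem~\ref{main} directly.

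What remains is to translate the two quantities in the bound from poset language into graph language. For the fence length, I would invoke Corollary~\ref{fence-genus}: since $X$ has a smallest element (the $r$-minimizer, which after the construction is the basepoint-region vertex) and its covering graph is $G$ with first Betti number $\beta$, the maximal fence length satisfies $M_F(X)\le 2\beta+2$. Substituting gives $2\log(2M_F(X))\le 2\log(2(2\beta+2))=2\log(4\beta+4)$, which is exactly the logarithmic factor in the target bound. For the hyperbolicity, I would argue that the Reeb-poset hyperbolicity $\poshyp_r(X)$ coincides with (or is bounded by) the metric hyperbolicity $\hyp(V,d_l)$: both are defined via the four-point/Gromov-product condition, and since $d_r=d_l$ and the Gromov product $\posgp_r$ uses $f=r$ and $d_r$, the poset Gromov product agrees with the metric Gromov product based at $p$, so the two hyperbolicities match. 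Combining the two substitutions yields precisely
\[
\max_{x,x'\in V}\bigl|d_l(x,x')-t_T(\pi(x),\pi(x'))\bigr|\le 2\,\log(4\beta+4)\,\hyp(V,d_l).
\]

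The main obstacle I anticipate is not the logarithmic-factor bookkeeping (which is a clean application of Corollary~\ref{fence-genus}) but the two identifications of invariants that silently underlie the reduction: first, rigorously establishing $d_r=d_l$ on $V$, which requires that $p$-regularity property (ii) forces poset paths in $X$ to realize exactly the graph geodesics of $G$ with no loss; and second, reconciling the Reeb-poset Gromov product $\posgp_r$ (which is anchored at the filtration value $f=r=d_l(p,\cdot)$) with the standard metric Gromov product and hence the metric hyperbolicity $\hyp(V,d_l)$. The subtlety is that the poset Gromov product is defined relative to the basepoint $p$ through $f$, so I would need to verify that the resulting hyperbolicity does not depend on this choice in a way that inflates the constant beyond $\hyp(V,d_l)$. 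I expect this matching of hyperbolicities to be the delicate step, and would treat it as the technical heart of the proof.
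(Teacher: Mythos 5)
Your proposal is correct and follows essentially the same route as the paper's own proof: the paper defines the order directly by $x \leq_p y \iff d_l(p,y)-d_l(p,x)=d_l(x,y)$ (Lemma \ref{vertex-poset}), shows that $G$ is precisely the covering graph of the resulting Reeb poset (Lemma \ref{vertex-covering}, which is where $p$-regularity (ii) enters), identifies $\poshyp_f$ with the based hyperbolicity $\hyp_p(V,d_l)$ (Lemma \ref{vertex-hyp}), and then concludes via Theorem \ref{main} and Corollary \ref{fence-genus} exactly as you do; your edge-orientation definition of the poset gives the same partial order under $p$-regularity. The hyperbolicity matching you flag as the ``technical heart'' is in fact immediate: since $d_f=d_l$, the poset Gromov product equals $\gp_p$, so $\poshyp_f=\hyp_p(V,d_l)\leq \hyp(V,d_l)$ because $\hyp$ is by definition the maximum of the based quantities $\hyp_q$ over all basepoints.
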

Note that any finite metric space $(X,d)$ can be isometrically embedded into a finite metric graph. The simplest example is the complete graph with the vertex set $X$ where the edge length is given by $d$. It is possible to obtain simpler embeddings \cite[Chapter 5.4]{ss03}. Here, we do not go into that path but assume that an embedding is already given.

Before providing the proof of Theorem \ref{main-graph} we go ahead and provide rthat of Theorem \ref{thm:fms}.
\begin{proof}[Proof of Theorem \ref{thm:fms}]
Let $p$ be a vertex of $G$. Without loss of generality we can assume that $G$ is $p$-regular since otherwise we can add some new vertices from its geometric realization to make it $p$-regular, as it is explained in the beginning of this section. Let $\pi: G \to (T,t_T)$ be the map given in Theorem \ref{main-graph}. Let $t: X \times X \rightarrow \R$ be the pseudo-metric given by $t(x,x')=t_T(\pi(x),\pi(x'))$. Then $t$ is a tree like metric on $X$ and the upper bound that we are trying to prove follows from \ref{main-graph}.
\end{proof}

\begin{Remark}
Note that for any finite metric space $X$ which can be isometrically embedded in the geometric realization of a metric graph $G$, the upper bound given in Theorem  \ref{thm:fms} still holds. This shows that $|X|$ can possibly be much larger than $4\beta+4$. In particular, the upper bound in Theorem \ref{thm:fms} can be much smaller than the one given by Gromov, i.e. $\log(2|X|) \, \hyp(X)$.
\end{Remark}

Through the following lemmas, we will carry this problem to the Reeb poset setting. Through this section assume that $G=(V,E,l)$ is a $p$-regular metric graph. 

\begin{lemma}\label{vertex-poset}
Define a relation $\leq_p$ on $V$ by $x \leq_p y$ if $d_l(p,y)-d_l(p,x)=d_l(x,y).$ Then $\leq_p$ is a partial order. 
\end{lemma}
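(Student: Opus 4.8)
The plan is to verify the three defining properties of a partial order for the relation $\leq_p$: reflexivity, antisymmetry, and transitivity. Reflexivity is immediate, since $d_l(p,x)-d_l(p,x)=0=d_l(x,x)$ holds for every vertex $x$, so $x\leq_p x$. Antisymmetry is also straightforward: if $x\leq_p y$ and $y\leq_p x$, then $d_l(p,y)-d_l(p,x)=d_l(x,y)=d_l(y,x)=d_l(p,x)-d_l(p,y)$, which forces $d_l(x,y)=0$ and hence $x=y$. These two cases require nothing beyond the definition and basic properties of the metric $d_l$.

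The substantive case, and the one I expect to be the main obstacle, is transitivity. Suppose $x\leq_p y$ and $y\leq_p z$, so that $d_l(p,y)-d_l(p,x)=d_l(x,y)$ and $d_l(p,z)-d_l(p,y)=d_l(y,z)$. Adding these gives $d_l(p,z)-d_l(p,x)=d_l(x,y)+d_l(y,z)$. To conclude $x\leq_p z$ I must show this quantity equals $d_l(x,z)$. The triangle inequality immediately yields $d_l(x,z)\leq d_l(x,y)+d_l(y,z)=d_l(p,z)-d_l(p,x)$. For the reverse inequality I would use the reverse triangle inequality applied to the basepoint: $d_l(x,z)\geq |d_l(p,z)-d_l(p,x)| = d_l(p,z)-d_l(p,x)$, where the absolute value resolves because $d_l(p,z)\geq d_l(p,x)$ (this monotonicity follows from the two hypotheses, since each step increases the distance to $p$). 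Combining the two inequalities gives $d_l(x,z)=d_l(p,z)-d_l(p,x)$, which is exactly $x\leq_p z$.

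The key structural observation making transitivity work is that $x\leq_p y$ says precisely that $y$ lies on a geodesic from $p$ to... more carefully, that the three points $p$, $x$, $y$ satisfy the equality case of the triangle inequality $d_l(p,y)=d_l(p,x)+d_l(x,y)$, i.e. that $x$ is metrically between $p$ and $y$. Transitivity of $\leq_p$ is then the statement that metric betweenness along a ray emanating from $p$ is transitive, which is what the two-step geodesic-additivity argument above captures. I would present the proof in exactly this order (reflexivity, antisymmetry, then transitivity), since the first two are one-line verifications and transitivity is where the additivity of distances along $p$-rays does the real work.
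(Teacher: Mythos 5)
Your proof is correct and follows essentially the same route as the paper's: reflexivity and antisymmetry via the observation that $x\leq_p y$ forces $d_l(p,x)\leq d_l(p,y)$, and transitivity by sandwiching $d_l(x,z)$ between $d_l(x,y)+d_l(y,z)=d_l(p,z)-d_l(p,x)$ (triangle inequality one way) and $d_l(p,z)-d_l(p,x)\leq d_l(x,z)$ (triangle inequality with basepoint $p$ the other way). The only cosmetic difference is that the paper writes this sandwich as a single chain of inequalities collapsing to equality, whereas you split it into two displayed inequalities and invoke monotonicity to resolve an absolute value that the paper's formulation never needs.
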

\begin{proof}
Note that $x \leq_p x$ and $x \leq_p y$ implies that $d_l(p,x) \leq d_l(p,y).$ Hence, if $x \leq_p y, y \leq_p x$, then $d_l(x,y)=d_l(p,x)-d_l(p,y)=0$, which means that $x=y$. It remains to show the transitivity. Assume that $x \leq_p y \leq_p z$. Then we have
\begin{align*}
d_l(x,z) &\leq d_l(x,y)+d_l(y,z) \\
				&= d_l(p,y)-d_l(p,x)+d_l(p,z)-d_l(p,y)
                =d_l(p,z)-d_l(p,x) \leq d_l(x,z),
\end{align*}
so $d_l(x,z)=d_l(p,z)-d_l(p,x)$ which means that $x \leq_p z$.
\end{proof}
\begin{lemma}\label{vertex-reeb}
$(V,\leq_p,d_l(p,\cdot):V \to \R)$ is a Reeb poset.
\end{lemma}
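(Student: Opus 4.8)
The plan is to unwind the definition of a Reeb poset and verify its two requirements directly. Recall that a Reeb poset $(R,f)$ consists of a poset $R$ together with a map $f$ that is \emph{strictly} order preserving, i.e. $x < y$ forces $f(x) < f(y)$. In our situation $R = V$ carries the relation $\leq_p$ and $f = d_l(p,\cdot)$. The first requirement—that $(V,\leq_p)$ is a genuine partial order—is exactly the content of Lemma \ref{vertex-poset}, so I would simply invoke it.

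For the second requirement I would check strict order preservation of $d_l(p,\cdot)$ by hand. Take $x <_p y$, meaning $x \leq_p y$ and $x \neq y$. The defining relation of $\leq_p$ gives $d_l(p,y) - d_l(p,x) = d_l(x,y)$, and since $x \neq y$ the positive-definiteness of the metric yields $d_l(x,y) > 0$; hence $d_l(p,x) < d_l(p,y)$. This is precisely the strictness condition, so together with Lemma \ref{vertex-poset} it exhibits $(V,\leq_p, d_l(p,\cdot))$ as a Reeb poset.

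There is no substantial obstacle here: the argument uses only the defining equation of $\leq_p$ and the positivity of $d_l$ off the diagonal. The single point I would be careful to state explicitly is which separation property ``strictly order preserving'' refers to—strict inequality in $V$ passing to strict inequality in $\R$—since it is exactly this strictness, rather than mere monotonicity, that later guarantees the definiteness of the Reeb length structure $l_f$ associated to the poset.
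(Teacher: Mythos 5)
Your proposal is correct and matches the paper's proof essentially verbatim: the partial-order structure is delegated to Lemma \ref{vertex-poset}, and strictness follows because $x <_p y$ gives $d_l(p,y)-d_l(p,x)=d_l(x,y)>0$ by positive-definiteness of the metric. The only difference is that you spell out the role of strictness more explicitly, which the paper leaves implicit.
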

\begin{proof}
Note that $d_l(p,\cdot)$ is strictly order preserving on $(V,\leq_p)$ since if $x <_p y$, then $d_l(p,y)-d_l(p,x)=d(x,y) > 0$.
\end{proof}
Note that the covering graph of a Reeb poset has a canonical metric graph structure given by $(v,w)\mapsto l_f(v,w)=|f(v)-f(w)|$.
\begin{lemma}\label{vertex-covering}
$G=(V,E,l)$ is the covering graph of the Reeb poset $(V,\leq_p,d_l(p,\cdot))$ as a metric graph (see Remark \ref{metric-covering-graph}). 
\end{lemma}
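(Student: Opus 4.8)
The plan is to establish the claimed identification of metric graphs in two stages: first I would match the combinatorial structure, showing that the edges of $G$ coincide (as unordered pairs) with the covering relations of the poset $(V,\leq_p)$, and then I would check that the two length assignments agree on each such edge. Throughout it is convenient to write $f:=d_l(p,\cdot)$, so that the canonical Reeb length of a comparable pair $v<_p w$ on the covering graph is $|f(v)-f(w)|=f(w)-f(v)$.

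First I would show that every edge of $G$ is a covering relation. Take $(v,w)\in E$. By property i) of $p$-regularity, $d_l(v,w)=|f(v)-f(w)|$; assuming without loss of generality $f(v)\leq f(w)$, this reads $f(w)-f(v)=d_l(v,w)$, i.e. $v\leq_p w$, and strictly since $v\neq w$. Suppose $w$ did not cover $v$, so there is $z$ with $v<_p z<_p w$. Telescoping the defining equalities of $\leq_p$ gives $d_l(v,z)+d_l(z,w)=(f(z)-f(v))+(f(w)-f(z))=f(w)-f(v)=d_l(v,w)$, so concatenating length-minimizing paths from $v$ to $z$ and from $z$ to $w$ produces a length-minimizing path from $v$ to $w$ through the vertex $z\notin\{v,w\}$. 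This path is distinct from the edge $(v,w)$, contradicting property ii). Hence $w$ covers $v$.

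Next I would prove the converse: every covering relation is an edge. Suppose $w$ covers $v$, so $v<_p w$ and $f(w)-f(v)=d_l(v,w)$. Choose a simple length-minimizing path from $v$ to $w$; since both endpoints are vertices, it is a concatenation of edges $v=u_0,u_1,\dots,u_k=w$. Summing property i) along the path yields $d_l(v,w)=\sum_i|f(u_{i+1})-f(u_i)|$, while telescoping yields $f(w)-f(v)=\sum_i(f(u_{i+1})-f(u_i))$. As these agree and $|a|\geq a$ with equality iff $a\geq 0$, each increment $f(u_{i+1})-f(u_i)$ is nonnegative; combined with property i) this gives $u_i\leq_p u_{i+1}$ for every $i$, hence $v\leq_p u_1\leq_p\cdots\leq_p w$. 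If $k\geq 2$, then $u_1$ satisfies $v<_p u_1<_p w$ (simplicity forces $u_1\notin\{v,w\}$), contradicting that $w$ covers $v$. Therefore $k=1$ and $(v,w)\in E$.

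Finally, the metric structures match: for each edge $(v,w)$ with $v<_p w$, the Reeb length is $|f(v)-f(w)|=f(w)-f(v)$, which equals $l(v,w)$ by property i). Together with the combinatorial identification this shows that $G$ and the covering graph of $(V,\leq_p,f)$ coincide as metric graphs. The hard part will be the converse direction: it relies on the (standard, but worth isolating) fact that a length-minimizing path between two vertices of a metric graph decomposes into full edges, so that property i) can be applied edge by edge, and then on extracting monotonicity of $f$ along that path to expose any intermediate vertex that would violate the covering condition.
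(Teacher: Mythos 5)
Your proof is correct, and its top-level decomposition --- identify the edge set with the covering relations of $(V,\leq_p)$, then match lengths via property i) --- is the same as the paper's; moreover your \emph{covers implies edge} direction is essentially the paper's argument (both take a length-minimizing edge path from $v$ to $w$, force every increment of $f=d_l(p,\cdot)$ along it to be nonnegative --- you by playing the sum of property i) against the telescoping identity, the paper by the termwise triangle inequality $d_l(v_i,v_{i-1})\geq d_l(p,v_i)-d_l(p,v_{i-1})$ and its equality case, which is the same computation once property i) is in hand --- and then let the covering hypothesis collapse the path to a single edge). Where you genuinely diverge is the \emph{edge implies covers} direction. The paper bootstraps it from the other implication: it takes a saturated chain $v=v_0,\dots,v_n=w$ in which each $v_i$ covers $v_{i-1}$, converts it into an edge path using the already-proven \emph{covers implies edge}, checks that this edge path is length minimizing, and then invokes property ii) to force $n=1$. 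You instead argue directly: any intermediate element $z$ with $v<_p z<_p w$ produces, by telescoping the defining equalities of $\leq_p$ and concatenating geodesic edge paths from $v$ to $z$ and from $z$ to $w$, a length-minimizing path from $v$ to $w$ through $z$, contradicting property ii). Your routing makes the two implications logically independent of each other and needs no saturated chains; the paper's routing recycles the first implication and, in passing, exhibits saturated chains as geodesic edge paths. Both rest on exactly the same two pillars --- property i) to get comparability of edge endpoints and equality of lengths, property ii) to exclude competing geodesics --- so the difference is in the architecture of one direction, not in substance.
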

\begin{proof}
Let $G'=(V,E',l')$ be the covering graph of the Reeb poset $(V,\leq_p,d_l(p,\cdot))$. Let us show that $E=E'$ and $l=l'$. Note that if $(v,w)$ is an edge contained in $E \cap E'$, then by the property of $G$ with respect to $p$ described in Theorem \ref{main-graph}, $l(v,w)=|d(p,v)-d(p,w)|$, which is equal to $l'(v,w)$ by definition. Hence it remains to show that $E=E'$.

Let $v,w$ be a pair of distinct vertices. Without loss of generality we can assume that $d_l(p,v) \leq d_l(p,w)$. Let us show that $w$ covers $v$ if and only if $\{v,w\}$ is an edge. 

``$\implies$'' Take a length minimizing path $(v=v_0,\dots,v_n=w)$ of edges in $G$. Then we have
\begin{align*}
d_l(v,w) = \sum_i d_l(v_i,v_{i-1})
				 \geq \sum_i d_l(p,v_i)-d_l(p,v_{i-1})
                 =d_l(p,w)-d_l(p,w)=d_l(v,w),
\end{align*}
so for each $i$ we have $d_l(v_i,v_{i-1})=d_l(p,v_i)-d_l(p,v_{i-1})$, which means $v_i \geq v_{i-1}$. Since $w$ covers $v$, this means the path consists of two vertex. Since the path was arbitrary, this implies that $(v,w)$ is an edge.

``$\impliedby$'' By $p$-regularity $d_l(v,w)=d_l(p,w)-d_l(p,v)$, hence $v <_p w$. Let $(v_0,\dots,v_n)$ be a sequence of vertices such that $v_{i+1}$ covers $v_i$. Note that $w$ covers $v$ if and only if $n=1$. By the previous part $(v_{i-1},v_i)$ is an edge, hence $(v_0,\dots,v_n)$ is a path of edges in $G$. It is length minimizing since $d_l(v,w)=d_l(p,w)-d_l(p,v)=\sum_i d_l(p, v_i)-d_l(p,v_{i-1})=\sum_i d_l(v_{i-1},v_i). $ Since the edge is the only length minimizing path between its vertices, n=1. This completes the proof.
\end{proof}
\begin{lemma}\label{vertex-hyp}
$\poshyp_f(V,\leq_p)=\hyp_p(V,d_l)$, where $f:=d(p,\cdot): V \to \R$.
\end{lemma}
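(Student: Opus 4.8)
The plan is to reduce the claimed identity to the equality of the two underlying Gromov products, for which the crux is showing that the Reeb metric $d_f$ induced by the length structure $l_f$ coincides with the graph metric $d_l$ on $V$. Since $f=d_l(p,\cdot)$, the Reeb Gromov product is $\posgp_f(x,y)=\tfrac12\big(d_l(p,x)+d_l(p,y)-d_f(x,y)\big)$, whereas the $p$-based product entering $\hyp_p(V,d_l)$ is $(x\mid y)_p=\tfrac12\big(d_l(p,x)+d_l(p,y)-d_l(x,y)\big)$. Once $d_f=d_l$ is in hand these two products agree for every pair, and because both $\poshyp_f(V,\leq_p)$ and $\hyp_p(V,d_l)$ are by definition the least $\epsilon\geq 0$ making the identical three-point inequality $\posgp_f(x,z)\geq\min(\posgp_f(x,y),\posgp_f(y,z))-\epsilon$ hold over all triples, they must coincide.

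First I would prove $d_f=d_l$ by establishing both inequalities. For $d_f\geq d_l$, observe that by the very definition of $\leq_p$ any comparable pair $x\leq_p y$ satisfies $d_l(x,y)=f(y)-f(x)=|f(x)-f(y)|$. Hence for an arbitrary poset path $(x_0,\dots,x_n)\in\pospaths(x,y)$ its $l_f$-length is $\sum_{i} |f(x_i)-f(x_{i-1})|=\sum_i d_l(x_{i-1},x_i)\geq d_l(x_0,x_n)$ by the triangle inequality; taking the infimum over poset paths gives $d_f\geq d_l$. For the reverse inequality I would invoke Lemma \ref{vertex-covering}: a length-minimizing edge path in $G$ is itself a poset path whose successive pairs are covering relations, and on each such edge $l=|f(\cdot)-f(\cdot)|=l_f$, so its $l_f$-length equals its $l$-length, namely $d_l(x,y)$. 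Therefore $d_f\leq d_l$, and the two metrics agree.

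With $d_f=d_l$ established, substituting into the definition of $\posgp_f$ yields $\posgp_f(x,y)=(x\mid y)_p$ for all $x,y\in V$. Reading off the definitions, the family of constraints defining $\poshyp_f(V,\leq_p)$ becomes literally the family defining $\hyp_p(V,d_l)$, so their minimal feasible values of $\epsilon$ coincide, which is the asserted equality.

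I expect the only genuine obstacle to be the clean identification $d_f=d_l$, and specifically the fact that $d_f$ is an infimum over \emph{all} comparability (poset) paths while $d_l$ is an infimum over \emph{edge} paths of the covering graph. The two short inequalities above bridge this gap, each leaning on a distinct prior fact: the definition of $\leq_p$ supplies $d_l(x,y)=|f(x)-f(y)|$ for every comparable pair (so that arbitrary poset paths can only be longer), while Lemma \ref{vertex-covering} guarantees that minimizing edge paths are poset paths of matching $l_f$-length (so that the graph distance is attained by a poset path). Everything after the equality of metrics is purely formal, since the two hyperbolicities are then defined by one and the same inequality.
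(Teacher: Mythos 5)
Your proposal is correct and follows essentially the same route as the paper: reduce everything to the identity $d_f=d_l$ (obtained via Lemma \ref{vertex-covering}), observe that then $\posgp_f(x,y)=\gp_p(x,y)$ for all pairs, and conclude that the two hyperbolicities, being defined by the identical three-point inequality, coincide. The only difference is that you spell out the two inequalities behind $d_f=d_l$ (arbitrary poset paths dominate $d_l$ by the definition of $\leq_p$ plus the triangle inequality; minimizing edge paths are poset paths of equal $l_f$-length), which the paper leaves implicit in its citation of Lemma \ref{vertex-covering}.
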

\begin{proof}
Note that by Lemma \ref{vertex-covering} $d_f=d_l$. Now the result follows since $$\posgp_f(v,w)=(f(v)+f(w)-d_f(v,w))/2=(d_l(p,v)+d_l(p,w)-d_l(v,w))/2=\gp_p(v,w).$$
\end{proof}
Now we can give the proof of Theorem \ref{main-graph}.
\begin{proof}[Proof of Theorem \ref{main-graph}]
Let $(T,t_f)$ be the Reeb poset tree of the Reeb poset $(V,\leq_p,f:=d_l(p,\cdot))$. Note that $(T,t_f)$ is a tree metric since it can be isometrically embedded into its covering graph, which is a metric tree by Proposition \ref{tree-char-fence}. By Lemma \ref{vertex-covering} and Corollary \ref{fence-genus} $d_f=d_l$, $M_F(V,\leq_p) \leq 2\beta+2$ and by Lemma \ref{vertex-hyp}, $\poshyp_f(V,\leq_p)=\hyp_p(V,d_l) \leq \hyp(V,d_l).$ . Now, the result follows from Theorem \ref{main}.
\end{proof}

%%%%%%%%%%%%%%%%%%%%%%%%%%%%%%%%%%%%%%%%%%

%%%%%%%%%%%%%%%%%%%%%%%%%%%%%%%%%%%%%%%%%%
% %%%%%%%%%%%%%%%%%%%%%%%%%%%%%%%%%%%%%%%%%%

\appendix
\section{Appendix}\label{appendix}

\subsection{Metric spaces}\label{sec:hyp}

For simplicity, we assume that all metric spaces we consider are finite.

\begin{definition}[Gromov product]
Let $(X,d)$ be a metric space and $p,x,y$ be points in $X$. The Gromov product $\gp_p(x,y)$ is defined by
\[\gp_p(x,y):=\frac{1}{2}(d(p,x)+d(p,y)-d(x,y)).\]
\end{definition}

\begin{definition}[Hyperbolicity]\label{def:hyp}
The $p$-hyperbolicity and hyperbolicity of $X$ can be respectively defined as follows:
\begin{align*}
\hyp_p(X)&:=\min \{\delta \geq 0: \gp_p(x,z) \geq \min(\gp_p(x,y),\gp_p(y,z)) - \delta \textrm{ for each } x,y,z \in X \}, \\
\hyp(X)&:= \max_{p \in X} \hyp_p(X).
\end{align*}
\end{definition}

The next results follow immediately.
\begin{proposition}
Let $X$ be a metric space and $p,q$ be any points in $X$. Then $$\hyp_p(X) \leq 2\, \hyp_q(X).$$
\end{proposition}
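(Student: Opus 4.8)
The plan is to fix $\delta := \hyp_q(X)$ and to show that the inequality defining $\hyp_p$ holds with constant $2\delta$; since $\hyp_p(X)$ is the least such constant, this gives $\hyp_p(X) \le 2\delta = 2\hyp_q(X)$. The first step I would take is to record the change-of-basepoint identity for Gromov products, which follows by directly expanding the definitions: for all $u,v \in X$,
\[
\gp_p(u,v) = \gp_q(u,v) + d(p,q) - \gp_q(p,u) - \gp_q(p,v).
\]

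Substituting this identity for each of $\gp_p(x,z)$, $\gp_p(x,y)$, $\gp_p(y,z)$ into the target inequality and cancelling the common terms $d(p,q)$ and $\gp_q(p,y)$, the quantity $\gp_p(x,z) - \min(\gp_p(x,y),\gp_p(y,z))$ simplifies to
\[
\big(\gp_q(x,z)+\gp_q(p,y)\big) - \min\big(\gp_q(x,y)+\gp_q(p,z),\, \gp_q(y,z)+\gp_q(p,x)\big).
\]
Hence establishing $\hyp_p(X) \le 2\delta$ reduces to proving the four-point inequality, for the products based at $q$ over the four points $\{p,x,y,z\}$,
\[
\gp_q(x,z)+\gp_q(p,y) \ge \min\big(\gp_q(x,y)+\gp_q(p,z),\, \gp_q(y,z)+\gp_q(p,x)\big) - 2\delta,
\]
which compares the perfect matching $\{xz,py\}$ against the other two perfect matchings of $\{p,x,y,z\}$.

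The remaining and main step is to derive this four-point inequality from the single-basepoint hypothesis, namely that $\gp_q(u,w) \ge \min(\gp_q(u,v),\gp_q(v,w)) - \delta$ holds for all $u,v,w$. I would prove the general fact that the $\delta$-inequality at one basepoint forces the $2\delta$ four-point inequality for every quadruple: writing $M_1,M_2,M_3$ for the three matching sums of $\{p,x,y,z\}$ (with $M_1$ the one above), one shows that the two smallest of $M_1,M_2,M_3$ differ by at most $2\delta$. After relabelling so that $M_1$ is compared against the smaller of $M_2,M_3$, each of the two products on the left, $\gp_q(x,z)$ and $\gp_q(p,y)$, is bounded below by applying the three-point $\delta$-inequality on a triangle formed by inserting one of the two remaining vertices, and a short case analysis on the ordering of the six pairwise products selects, in each case, the pair of triangles whose two lower bounds sum to the desired right-hand side. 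Exactly two applications of the hypothesis are needed, which is precisely where the factor $2$ (and hence the factor $2$ in $\hyp_p \le 2\hyp_q$) originates.

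The hard part is this last four-point lemma: the three-point hypothesis only lower-bounds products, so one cannot control any single product from above, and the argument must be arranged so that the vertex inserted into each triangle reproduces the two products appearing on the right-hand side; managing all orderings of the six products (equivalently, all positions of $M_1$ among the three matchings) is the combinatorial crux, and it is where the estimate degrades from $\delta$ to $2\delta$. This equivalence between the one-point and four-point formulations of hyperbolicity is classical and underlies the basepoint independence of $\delta$-hyperbolicity \cite{bbi01,g87}, so one may alternatively invoke it directly rather than reprove it here.
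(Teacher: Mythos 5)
Your proposal is correct, but note that it is doing genuinely more than the paper: the paper offers no proof of this proposition at all — it is one of the statements introduced by ``The next results follow immediately,'' implicitly deferring to the classical basepoint-change estimate for Gromov hyperbolicity. What you have written is a correct reconstruction of that classical argument. The change-of-basepoint identity is an exact expansion of the definitions; the cancellation reducing the claim $\hyp_p(X)\le 2\delta$ to the four-point inequality
\begin{equation*}
\gp_q(x,z)+\gp_q(p,y)\;\ge\;\min\bigl(\gp_q(x,y)+\gp_q(p,z),\,\gp_q(y,z)+\gp_q(p,x)\bigr)-2\delta
\end{equation*}
is also exact (the terms $d(p,q)$ and $\gp_q(p,y)$ cancel as you say); and the four-point lemma does follow from exactly two applications of the three-point hypothesis. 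The one step you leave compressed — the case analysis — does close: writing $a=\gp_q(x,y)$, $b=\gp_q(p,z)$, $c=\gp_q(y,z)$, $d=\gp_q(p,x)$, inserting $y$ or $p$ into the triple bounding $\gp_q(x,z)$ and $x$ or $z$ into the triple bounding $\gp_q(p,y)$ yields
\begin{equation*}
\gp_q(x,z)+\gp_q(p,y)\;\ge\;\max\bigl(\min(a,c),\min(b,d)\bigr)+\max\bigl(\min(a,d),\min(b,c)\bigr)-2\delta,
\end{equation*}
and since the symmetries of the quadruple allow one to assume $a=\min(a,b,c,d)$, the maxima collapse to $\min(b,d)+\min(b,c)$, which is $\ge\min(a+b,\,c+d)$ in each of the four cases comparing $b$ against $c$ and $d$. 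So your argument is complete in outline and sound. The trade-off between the two treatments is the expected one: the paper's bare assertion is defensible because the statement is exactly the standard fact that $\delta$-hyperbolicity at one basepoint implies $2\delta$-hyperbolicity at every basepoint (available in the references you and the paper both cite), while your version is self-contained and makes visible where the factor $2$ enters, namely in the two uses of the three-point hypothesis.
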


\begin{corollary}\label{zerohyp}
A metric space $X$ has 0 hyperbolicity if and only if there exists a point $p$ in $X$ such that $\hyp_p(X)=0$.
\end{corollary}

\begin{definition}[Tree metrics]
A finite tree metric is a finite metric space $T$ which can be isometrically embedded into a metric tree (i.e. a tree graph with a length structure). 
\end{definition}

\begin{proposition}\label{tree-hyp}
A finite metric space is a tree metric if and only if its hyperbolicity is $0$.
\end{proposition}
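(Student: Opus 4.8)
The statement is an ``if and only if'', so I would prove the two implications separately. Throughout I would use Corollary \ref{zerohyp}, which replaces the global quantity $\hyp(X)=\max_p\hyp_p(X)$ by the existence of a \emph{single} base point $p$ with $\hyp_p(X)=0$, together with the elementary observation that $\hyp_p$ is monotone under passing to subspaces containing the base point: if $p\in Y\subseteq Z$ then $\hyp_p(Y)\leq\hyp_p(Z)$, because the defining inequalities for $Y$ form a subfamily of those for $Z$ (same $p$, fewer triples).

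For the forward implication (tree metric $\Rightarrow\hyp(X)=0$) I would exploit the Reeb machinery developed in Section \ref{application}. Isometrically embed $X$ into a finite metric tree graph $\mathcal{T}$, subdividing if necessary so that every point of $X$ is a vertex, and fix a base vertex $p\in X$. Since $\mathcal{T}$ is a tree, every edge is the unique geodesic between its endpoints and points radially away from $p$, so $\mathcal{T}$ is $p$-regular; moreover, for any vertex $y$ the vertices $z$ with $z\leq_p y$ are exactly those lying on the geodesic $[p,y]$, and these are totally ordered by distance to $p$, hence form a chain. Thus $(V,\leq_p,d_l(p,\cdot))$ is a Reeb \emph{tree} poset, so by Proposition \ref{treeposet-hyp} its poset-hyperbolicity vanishes, and by Lemma \ref{vertex-hyp} this quantity equals $\hyp_p(V,d_l)$. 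Therefore $\hyp_p(V)=0$, and subspace monotonicity gives $\hyp_p(X)\leq\hyp_p(V)=0$; Corollary \ref{zerohyp} then yields $\hyp(X)=0$.

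The converse (hyperbolicity $0\Rightarrow$ tree metric) is the main obstacle. The tempting shortcut is to reverse the forward argument: embed $X$ isometrically into some metric graph $G$, make it $p$-regular, and observe that if $\hyp_p(V,d_l)=0$ then Lemma \ref{vertex-hyp} and Proposition \ref{treeposet-hyp} force $(V,\leq_p)$ to be a Reeb tree poset, whose covering graph is a tree (Proposition \ref{tree-char-fence}) realizing $d_l$. The trouble is that $\hyp(X)=0$ does \emph{not} imply $\hyp_p(V,d_l)=0$ for an arbitrary such $G$: embedding an equilateral triangle into the complete graph on its three vertices and subdividing its edges produces a metric cycle, which has strictly positive hyperbolicity, even though the triangle itself is a tree metric. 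In other words, the branch/merge points one must add in order to recover $d$ as a Reeb metric have to be inserted in an already tree-like fashion, which is precisely the content of this direction.

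I would therefore prove the converse by directly constructing the tree, by induction on $|X|$. Choose $p$ with $\hyp_p(X)=0$, remove a point $q$ realizing the largest Gromov product with some other point, obtain by induction a metric tree isometrically containing $X\setminus\{q\}$, and then reattach $q$, hanging it off the tree at the unique point lying at height $\max_{x\neq q}\gp_p(q,x)$ along the branch toward the maximizer. The key step is checking that this attachment point is well defined and that the resulting distances restore $d$; here the $0$-hyperbolicity inequality $\gp_p(x,z)\geq\min(\gp_p(x,y),\gp_p(y,z))$ is exactly what guarantees that the collection of pairwise Gromov products is consistent with a single tree (it forces the two smallest of any three products to coincide), so that $q$ attaches unambiguously and isometrically. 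Verifying this consistency is the crux of the proof, and is where the hypothesis is used in full strength.
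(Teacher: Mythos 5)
The paper offers no proof of this proposition at all---it simply cites \cite[Theorem 5.14]{ss03}---so your argument is necessarily a different route, and most of it is genuinely good. Your forward implication is complete and correct, and it is a nice self-contained use of the paper's own machinery, which the paper itself never applies to this statement: a metric tree graph is indeed automatically $p$-regular for every base vertex $p$ (one endpoint of each edge lies on the unique geodesic from $p$ to the other, and edges are the unique geodesics between their endpoints), the $\leq_p$-down-set of a vertex $y$ is exactly the vertex set of the geodesic $[p,y]$ and hence a chain, so $(V,\leq_p,d_l(p,\cdot))$ is a Reeb tree poset, and then Proposition \ref{treeposet-hyp}, Lemma \ref{vertex-hyp}, subspace monotonicity of $\hyp_p$, and Corollary \ref{zerohyp} give $\hyp(X)=0$. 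Your warning that the naive reversal of this argument fails---$p$-regularizing an arbitrary ambient graph, e.g.\ a subdivided triangle, can produce a vertex set with strictly positive $\hyp_p$ even when $\hyp(X)=0$---is also correct and is exactly the right thing to flag.

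The one real deficiency is in the converse, where you stop at what you yourself call the crux: you assert, but never verify, that hanging $q$ at height $h:=\max_{x\neq q}\gp_p(q,x)$ restores all distances. As written this is a sketch, not a proof. The verification does go through along the lines you indicate. Let $y$ attain the maximum, let $m$ be the point at height $h$ on the branch $[p,y]$ of the tree for $X\setminus\{q\}$, and attach $q$ to $m$ by an edge of length $d(p,q)-h$. For any other $x$, in a tree the meeting height of $[p,x]$ and $[p,y]$ is $\gp_p(x,y)$, so split into two cases. If $\gp_p(x,y)\geq h$, then $m$ lies on $[p,x]$ and the new tree distance is $d(p,q)+d(p,x)-2h$; since $\gp_p(q,x)\leq h$ by maximality while your ``two smallest of any three products coincide'' observation applied to $\{\gp_p(q,y),\gp_p(q,x),\gp_p(x,y)\}$ forces $\gp_p(q,x)=h$, this equals $d(q,x)$. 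If $\gp_p(x,y)<h$, the branches separate below $m$, the new tree distance is $d(p,q)+d(p,x)-2\gp_p(x,y)$, and the same observation forces $\gp_p(q,x)=\gp_p(x,y)$. Well-definedness when two points $y,y'$ attain the maximum follows similarly, since $\gp_p(y,y')\geq\min(\gp_p(y,q),\gp_p(q,y'))=h$ makes the two candidate attachment points coincide. Two small repairs: take $q\neq p$ (so that subspace monotonicity lets you apply the inductive hypothesis to $X\setminus\{q\}$), and note that your requirement that $(q,y)$ realize the largest Gromov product among \emph{all} pairs is unnecessary---maximality of $\gp_p(q,\cdot)$ over $x\neq q$ is all the argument uses. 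With this case analysis spelled out, your induction closes and the proof is correct.
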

A proof of this can be found in \cite[Theorem 5.14]{ss03}.

\subsection{Graphs}

Introduce oriented graphs, trees, rooted trees, Betti number (and its equality to 1+ e -v)

For simplicity we assume that all graphs we  condiser are finite, simple and connected.
\begin{definition}[Graph]
A graph $G$ is a pair $(V,E)$ where $V$ is a (finite) set of \textit{vertices} and $E$ is a (finite) subset of two element subsets of $V$ and it is called the set of \textit{edges}. A directed edge is an edge with an ordering of its vertices. An \textit{ordered graph} is a graph with a selection of a (unique) direction for each edge.  
\end{definition}

\begin{definition}[Path]
A \textit{path} from a vertex $v$ to a vertex $w$ is a tuple $(v_0,\dots,v_n)$ of  vertices such that $v_0=v,v_n=w$ and $\{v_{i-1},v_i\}$ is an edge for each $i=1,\dots,n$.  A path in a directed graph is called \textit{directed} if the directions of its edges coincides with that of the graphs. A path is  called \textit{simple} if it consists of distinct vertices with the possibility of the exception $v_0=v_n$. If $(v_0,\dots,v_n)$ and $(w_0,\dots,w_m)$ are paths so that $v_n=w_0$, we define the concatenation $(v_0,\dots,v_n)\cdot(w_0,\dots,w_m)$ by $(v_0,\dots,v_n=w_0,\dots,w_m)$.
\end{definition}

\begin{definition}[Connected graph]
A graph is called \textit{connected} if there exists a path between each pair of vertices.
\end{definition}

\begin{definition}[Tree]
A connected graph $T$ is called a tree if  there exists a unique simple path between each pair of vertices. 
\end{definition}

\begin{definition}[First Betti number]
The \textit{first Betti number} $\beta_1(G)$ of a  connected graph  $G$ is defined as the minimal number of edges one needs to remove from $G$ to obtained a tree.
\end{definition}

The following proposition follows from the Euler formula.
\begin{proposition}\label{betti-formula}
Let $G$ be a connected graph, $V$ denote its number of vertices and $E$ denote its number of edges and $\beta:=\beta_1(G)$. Then $\beta=1-V+E$.
\end{proposition}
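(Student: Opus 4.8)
The plan is to reduce the statement to the elementary fact that a tree on $n$ vertices has exactly $n-1$ edges, and then to exploit that deleting edges from $G$ never alters the vertex set.

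First I would prove the auxiliary claim: \emph{any tree with $n$ vertices has exactly $n-1$ edges}, by induction on $n$. The single-vertex tree has no edges, settling the base case. For the inductive step I would use that every finite tree has a leaf, i.e.\ a vertex of degree one: taking a simple path of maximal length, its endpoint can have no neighbor off the path (else the path extends) and no neighbor on the path other than its path-successor (else two distinct simple paths would join them, violating the uniqueness required in the definition of a tree), so it has degree one. Deleting this leaf together with its unique incident edge leaves a connected graph in which simple paths are still unique, hence a tree on $n-1$ vertices; by induction it has $n-2$ edges, so restoring the leaf gives $n-1$.

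Next I would observe that producing a tree from $G$ by deleting edges always yields a \emph{spanning} tree. Since edge deletion leaves all $V$ vertices in place, any tree obtained this way has $V$ vertices, hence exactly $V-1$ edges by the auxiliary claim; therefore the number of deleted edges equals $E-(V-1)=E-V+1$ no matter which edges are removed. It then remains to check that at least one such deletion is possible, so that the minimum in the definition of $\beta_1(G)$ is actually attained: whenever the current connected graph still contains a cycle, removing any single edge of that cycle keeps it connected while strictly decreasing the edge count, and iterating terminates at a tree. Combining these observations gives $\beta_1(G)=E-V+1=1-V+E$.

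The main obstacle is the auxiliary claim, and within it the guarantee that a leaf exists so the induction can run; once the leaf lemma is secured, the fact that edge deletion preserves the vertex set makes the final edge count purely arithmetic.
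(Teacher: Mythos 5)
Your proof is correct, but it takes a genuinely different (and far more self-contained) route than the paper, which offers no argument at all beyond the remark that the proposition ``follows from the Euler formula.'' You instead work directly from the paper's definition of $\beta_1$ as the minimal number of edge deletions needed to reach a tree, via three ingredients: the leaf lemma and induction giving that a tree on $n$ vertices has $n-1$ edges; the observation that edge deletion preserves the vertex set, so \emph{every} tree reachable by deletions is spanning and therefore requires deleting exactly $E-V+1$ edges (this is slightly stronger than what is needed, since it shows the count is the same for all admissible deletions, not merely that the minimum equals $E-V+1$); and the existence of at least one admissible deletion by iteratively breaking cycles. What your approach buys is a proof from first principles that also certifies the greedy cycle-breaking procedure as automatically optimal; what the paper's approach buys is brevity, at the cost of assuming the Euler characteristic identity as known. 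One small point to patch: when your cycle-breaking iteration terminates you have a connected graph with no cycles, and you should note explicitly that such a graph satisfies the paper's definition of a tree (unique simple path between each pair of vertices) --- if two distinct simple paths joined the same pair, their union would contain a cycle. With that remark added, your argument is complete.
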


\begin{definition}[Metric graphs]
A \textit{metric graph} is a graph $G=(V,E)$ and an a function $l: E \to \R_{>0}$. Given an edge $e$ in a metric graph, we call $l(e)$ the \textit{length} of $e$. Each path $(v_0,\dots,v_n)$ in a metric graph $(G,l)$ can be assigned a length defined as $\sum_i l(v_{i-1},v_i)$. This induces a metric structure $d_l$ on the vertex set $V$ of $G$ through the length minimizing paths.
\end{definition}
\begin{Remark}\label{top-metric-graph}
The geometrical realization of a metric graph has a canonical length structure given by the isometric identification of the geometric realization of an edge $e$ with $[0,l(e)]$.  Furthermore, the inclusion of $(V,d_l)$ into this realization is an isometric embedding.
\end{Remark}
\begin{proposition}\label{subdivision}
Let $(G,l)$ be a metric graph and $p$ be a vertex. Adding at most one vertex from the geometric realization of each edge if necessary, we can guarantee that for each edge $\{v,w\}$ we have
$$l(v,w)=d_l(v,w)=|d_l(p,v)-d_l(p,w)|.$$
\end{proposition}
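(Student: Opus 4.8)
The plan is to analyze, for each edge, the restriction of the radial function $r := d_l(p,\cdot)$ to that edge, and to place a new vertex exactly at its maximum. First I would fix an edge $\{v,w\}$ and parametrize its geometric realization by arc length $t\in[0,l(v,w)]$ with $t=0$ at $v$. Since $p$ is a vertex, any path from $p$ to a point $x$ in the interior of the edge must enter the edge through one of its endpoints $v$ or $w$; hence $r(x)=\min\big(d_l(p,v)+t,\ d_l(p,w)+(l(v,w)-t)\big)$. This is a ``tent'' function that rises from $v$ with slope $+1$, peaks, and descends to $w$ with slope $-1$, with peak located at $t^\ast=\tfrac{1}{2}\big(l(v,w)+d_l(p,w)-d_l(p,v)\big)$. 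The triangle inequality for $d_l$ (using that the edge itself is a path, so $d_l(p,v)-d_l(p,w)$ and $d_l(p,w)-d_l(p,v)$ are each at most $l(v,w)$) guarantees $t^\ast\in[0,l(v,w)]$.

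Next I would split into cases. If $t^\ast\in\{0,l(v,w)\}$ then $r$ is monotone along the edge, which forces $|d_l(p,v)-d_l(p,w)|=l(v,w)$; combined with the general bound $|d_l(p,v)-d_l(p,w)|\leq d_l(v,w)\leq l(v,w)$ this yields $d_l(v,w)=l(v,w)=|d_l(p,v)-d_l(p,w)|$, so the edge already satisfies the desired identity and no vertex is added. If instead $t^\ast\in(0,l(v,w))$, I would insert a single new vertex $m$ at parameter $t^\ast$, splitting $\{v,w\}$ into $\{v,m\}$ and $\{m,w\}$ of lengths $t^\ast$ and $l(v,w)-t^\ast$.

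The key verification is that each sub-edge now satisfies the identity. By construction $d_l(p,m)=r(m)=d_l(p,v)+t^\ast=d_l(p,w)+(l(v,w)-t^\ast)$. Combining $d_l(p,m)=d_l(p,v)+t^\ast$ with the triangle inequality $d_l(p,m)\leq d_l(p,v)+d_l(v,m)$ and the trivial bound $d_l(v,m)\leq t^\ast$ squeezes $d_l(v,m)=t^\ast$; symmetrically $d_l(m,w)=l(v,w)-t^\ast$. Hence for $\{v,m\}$ we obtain $l(v,m)=t^\ast=d_l(v,m)=d_l(p,m)-d_l(p,v)=|d_l(p,v)-d_l(p,m)|$, and likewise for $\{m,w\}$. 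Finally, since subdividing an edge by a point of its geometric realization (with the two pieces retaining their arc lengths) leaves the geometric realization, and hence the metric $d_l$, unchanged, all distances from $p$ — and in particular every peak location $t^\ast$ — are computed once and for all in the fixed ambient space. The edges may therefore be treated independently, adding at most one vertex to each.

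I expect the only delicate points to be the tent-function identity $r(x)=\min\big(d_l(p,v)+t,\ d_l(p,w)+(l(v,w)-t)\big)$, i.e. the justification that a shortest path from the vertex $p$ to an interior edge point must exit through an endpoint, and then the squeeze argument establishing that the freshly created sub-edges are themselves length-minimizing, so that $d_l(v,m)=t^\ast$. Once these are in place, everything reduces to the triangle inequality and the invariance of $d_l$ under subdivision.
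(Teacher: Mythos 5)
Your proof is correct and takes essentially the same approach as the paper's: both subdivide each edge at the unique point where $d_l(p,\cdot)$ attains its maximum (your tent-function peak $t^\ast$ is precisely the paper's farthest point reachable by a length-minimizing curve through $v$), after which every edge is monotone under $d_l(p,\cdot)$ and lies on a geodesic from $p$. Your explicit case analysis and the squeeze argument for $d_l(v,m)=t^\ast$ simply spell out in more detail what the paper argues geometrically via covering the edge by two length-minimizing curves.
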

\begin{proof}
Without loss of generality we can assume that  $d_l(p,v) \leq d_l(p,w)$. Identify the geometric realization of the edge $(v,w)$ with $[0,l(v,w)]$. Let $t$ be the maximal element in that edge such that there is a length minimizing curve in the geometric realization of $G$ from $p$ to $t$ passing through $v$. Note that $t \geq 0$ and for any $t' > t$, all length minimizing curves from $p$ to $t'$ passes through $w$. Hence, there are length minimizing curves $\alpha,\alpha'$ from $p$ to $t$ such that $\alpha$ passes through $v$ and $\alpha'$ passes through $w$. Since these curves cover $[v,w]$ and every point in these curves other than $t$ has distance strictly smaller than $d_l(p,t)$ to $p$, then $t$ is the unique point on the edge where $d(p,\cdot)$ takes its maximum. Extend the vertex set adding all such points. Hence all local maximums of $d_l(p,\cdot)$ in the geometric realization of $G$ is contained in the vertex set. Furthermore, given an edge $(v,w)$ such that $d_l(p,v) \leq d_l(p,w)$, there exist a length minimizing curve in the geometric realization from $p$ to $w$ containing the edge, therefore $d_l(v,w)=l(v,w)=|d_l(p,v)-d_l(p,w)|$.
\end{proof}

\subsection{Example where $\Phi \sim \Upsilon$}\label{ex:same-rate}

\begin{figure}
\begin{center}
\includegraphics[width=0.5\textwidth]{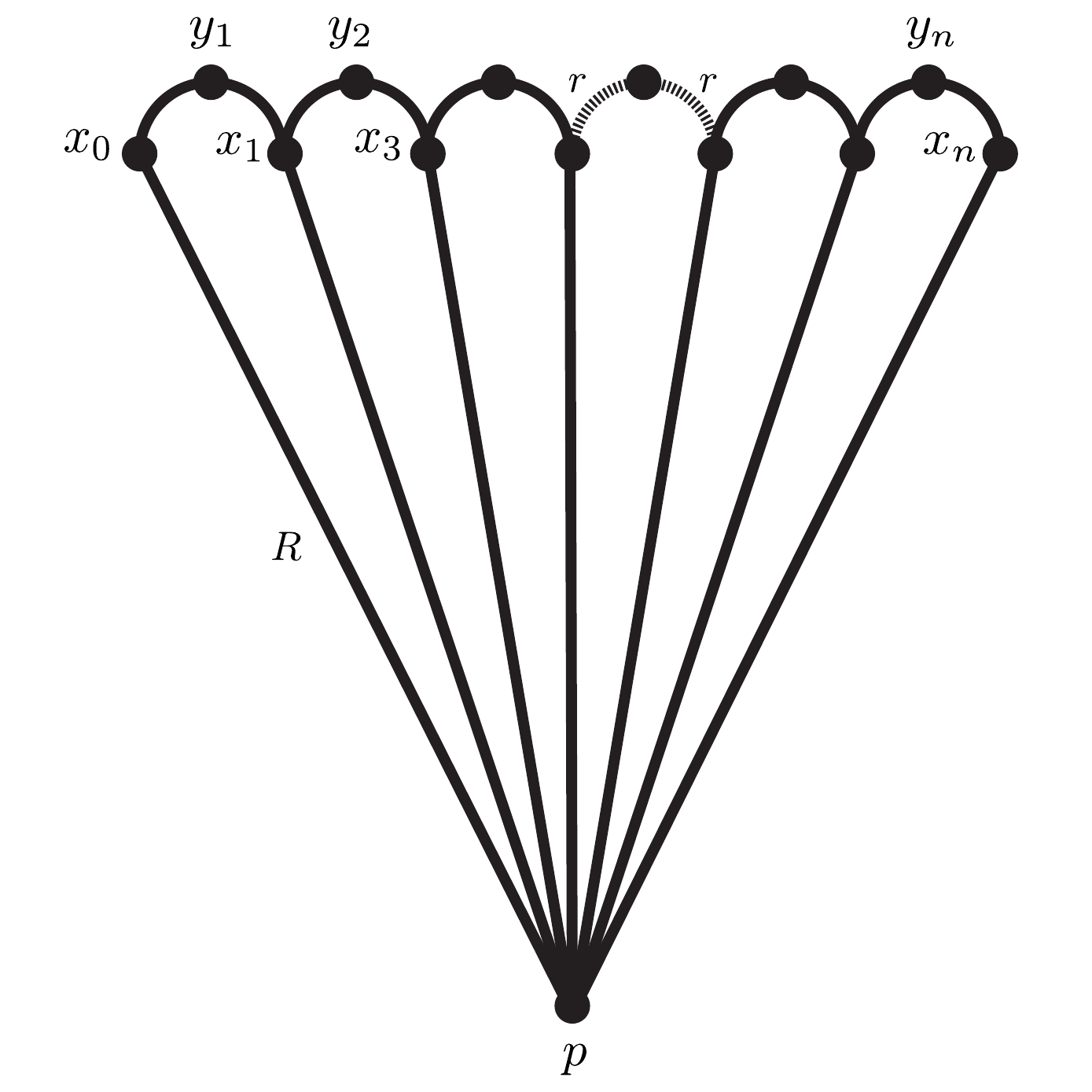}
\caption{Let $R\geq r >0$ and consider the metric graph from the figure. Let $Z_n$ be the finite subset $\{p,x_0,\ldots,x_n,y_1,\ldots,y_n\}.$ We show that $\Phi(Z_n) \sim 2 \log(4n)\,\hyp(Z_n)$ and $\Upsilon_p(Z_n) = 2 \log(4n+4)\,\hyp(Z_n)$.}\label{fig:ex:same-rate}
\end{center}
\end{figure}

%%%%%%%%%%%%%%%%%%%%%%%%%%%%%%%%%%%%%%%%%%%%%

Let $\mathcal{G}_n$ be the metric graph with the vertex set $Z_n$ as it is described in Figure \ref{fig:ex:same-rate}. The Gromov bound for $Z_n$ is $\Upsilon(Z_n) = 2\log(4n+4)\,\hyp(Z_n)$. 

Assume $Z_n$ is isometrically embedded in the geometric realization of metric a graph $G=(V,E,l)$. We can assume that $G$ is $p$-regular and $V$ contains $Z$. Consider the poset structure $\leq_p$ on $V$ described in Lemma \ref{vertex-poset}. Under this poset structure $(x_0,y_1,x_1,\dots,x_n)$ becomes a fence with length $2n$ (note that this is true independent of the embedding, since $\leq_p$ is completely determined by the metric). By Lemma \ref{vertex-covering} and Corollary \ref{fence-genus}, $\beta_1(G) \geq n-1$. Since $Z_n$ is a subspace of $G$, $\hyp(G)\geq \hyp(Z_n)$. Therefore, $\phi(G) \geq 2\, \log(4n) \, \hyp(Z_n)$. Since $G$ was arbitrary, we have $\Phi(Z_n) \geq 2\, \log(4n) \, \hyp(Z_n)$

If $R=r$, then one can show that $\hyp(\mathcal{G}_n)=\hyp(Z_n)$. Also note that $\beta(\mathcal{G}_n)=n$. In this case we get the upper bound $\Phi(Z_n) \leq \phi(\mathcal{G}_n) = 2 \, \log(4n+4) \hyp(Z_n)$. Therefore in this case $\Phi(Z_n),\Upsilon(Z_n)$ have the same growth rate.


\begin{thebibliography}{MBW13}

\bibitem[BBI]{bbi01}
Dmitri Burago, Yuri Burago, and Sergei Ivanov.
\newblock {\em A course in metric geometry}, volume~33.

\bibitem[BGW14]{b14}
U.~Bauer, X.~Ge, and Y.~Wang.
\newblock Measuring distance between {R}eeb graphs.
\newblock In {\em Proceedings of the Thirtieth Annual Symposium on
  Computational Geometry}, SOCG'14, pages 464--473, 2014.

\bibitem[CMS16]{c16-nips}
Samir Chowdhury, Facundo M{\'e}moli, and Zane~T Smith.
\newblock Improved error bounds for tree representations of metric spaces.
\newblock In {\em Advances in Neural Information Processing Systems (NIPS
  2016)}, pages 2838--2846, 2016.

\bibitem[CSA03]{carr03}
Hamish Carr, Jack Snoeyink, and Ulrike Axen.
\newblock Computing contour trees in all dimensions.
\newblock {\em Computational Geometry}, 24(2):75--94, 2003.

\bibitem[DHS12]{duda12}
Richard~O Duda, Peter~E Hart, and David~G Stork.
\newblock {\em Pattern classification}.
\newblock John Wiley \& Sons, 2012.

\bibitem[Gro]{g87}
Mikhael Gromov.
\newblock Hyperbolic groups.
\newblock {\em Essays in group theory}, 8(75-263):2.

\bibitem[JS71]{j71}
Nicholas Jardine and Robin Sibson.
\newblock Mathematical taxonomy.
\newblock {\em London etc.: John Wiley}, 1971.

\bibitem[MBW13]{mbw13}
Dmitriy Morozov, Kenes Beketayev, and Gunther Weber.
\newblock Interleaving distance between merge trees.
\newblock {\em Discrete and Computational Geometry}, 49:22--45, 2013.

\bibitem[Ree46]{reeb}
G.~Reeb.
\newblock Sur les points singuliers d'une forme de {P}faff compl\'{e}tement
  int\'{e}grable ou d'une fonction num\'{e}rique.
\newblock {\em Comptes Rendus de L'Acad\'{e}mie des Sciences}, 222:847--849,
  1946.

\bibitem[SS05]{ss03}
C.~Semple and M.A. Steel.
\newblock {\em Phylogenetics}.
\newblock Oxford lecture series in mathematics and its applications. Oxford
  University Press, 2005.

\end{thebibliography}
\end{document}